\def\@cite#1#2{[\textbf{#1\if@tempswa , #2\fi}]}
\theoremstyle{definition} \newtheorem{definition}{Definition}[section]
\theoremstyle{definition} \newtheorem{remark}[definition]{Remark}
\theoremstyle{plain} \newtheorem{lemma}[definition]{Lemma}
\theoremstyle{plain} \newtheorem{proposition}[definition]{Proposition}
\theoremstyle{plain} \newtheorem{theorem}[definition]{Theorem}
\theoremstyle{plain} \newtheorem{corollary}[definition]{Corollary}
\theoremstyle{definition} 
\theoremstyle{plain} 
\theoremstyle{definition}
\DeclareMathOperator{\dist}{dist}
\DeclareMathOperator{\supp}{supp}
\DeclareMathOperator{\diam}{diam}
\newcommand{\R}{\mathbb{R}}
\newcommand{\Qbf}{\mathbf{Q}}
\newcommand{\e}{\varepsilon}
\renewcommand{\L}{\mathscr L}
\renewcommand{\L}{\mathscr L}
\renewcommand{\H}{\mathcal H}
\numberwithin{equation}{section} 
\newcommand{\Jrm}{\mathrm{J}}
\newcommand{\Mcal}{\mathcal{M}}
\newcommand{\Rcal}{\mathcal{R}}
\newcommand{\Sbb}{\mathbb{S}}
\DeclareMathOperator{\trace}{trace}
\newcommand{\set}[2]{\left\{\, #1 \ \textup{{:}}\ #2 \,\right\}}
\newcommand{\dpr}[1]{\langle #1 \rangle}
\newcommand{\sym}{\mathrm{sym}}
\newcommand{\skw}{\mathrm{skew}}
\newcommand{\toweak}{\rightharpoonup}
\newcommand{\toweakstar}{\overset{*}\rightharpoonup}
\newcommand{\toup}{\uparrow}
\newcommand{\embed}{\hookrightarrow}
\newcommand{\sbullet}{\begin{picture}(1,1)(-0.5,-2)\circle*{2}\end{picture}}
\newcommand{\frarg}{\,\sbullet\,}
\newcommand{\eps}{\varepsilon}
\theoremstyle{plain} \newtheorem*{theorem*}{Theorem}
\theoremstyle{plain} 
\theoremstyle{plain} \newtheorem*{mthm*}{Main Theorem}
\theoremstyle{plain} \newtheorem*{conjecture*}{Conjecture}
\theoremstyle{plain} 
\theoremstyle{plain} \newtheorem*{problem*}{Problem}
\numberwithin{equation}{section} 
\title[A BBM representation for $BD$]{A Bourgain--Brezis--Mironescu representation \\ for functions with bounded deformation}
\author{Adolfo Arroyo-Rabasa}
\address[A.\ Arroyo-Rabasa]{Mathematics Institute, University of Warwick,
	Zeeman Building, CV4~7HP Coventry, UK}
\email{adolforabasa@gmail.com}
\author{Paolo Bonicatto}
\address[P.\ Bonicatto]{Mathematics Institute, University of Warwick,
	Zeeman Building, CV4~7HP Coventry, UK}
\email{Paolo.Bonicatto@warwick.ac.uk}
\begin{document}
	\date{\today} 
	\keywords{bounded deformation, Bourgain--Brezis--Mironescu representation, linear elasticity, symmetric gradient}
	\subjclass[2020]{Primary 46E30, Secondary 42B35, 74B05}
	
		\begin{abstract} We establish a non-local integral difference quotient  representation for symmetric gradient semi-norms in $BD(\Omega)$ and $LD(\Omega)$, which does not require the manipulation of distributional derivatives. Our representation extends the formulas for the symmetric gradient established by Mengesha for vector-fields in $W^{1,p}(\Omega;\R^d)$, which are inspired by the gradient semi-norm formulas introduced by Bourgain, Brezis and Mironescu in $W^{1,p}(\Omega)$ and by D\'avila in $BV(\Omega)$.
		\end{abstract}
	
	\maketitle	

\section{Introduction} Let $\Omega$ be a connected open subset of $\R^d$ with uniformly Lipschitz boundary and let $M^{d\times d}_\sym$ be the space of symmetric $(d \times d)$  real valued matrices.
The distributional symmetric gradient of an integrable vector-field $u : \Omega \to \R^d$ is defined as the $M^{d \times d}_\sym$-valued distribution
\begin{equation}\label{eq:E}
	\begin{split}
	Eu & \coloneqq \frac 12 (Du + Du^T) \\
	&  \phantom{:}= \frac 12 (\partial_i u^j + \partial_j u^i), \qquad i,j = 1,\dots,d,
	\end{split}
\end{equation}
where $\partial_i$ denotes the distributional partial derivative in the $e_i$ canonical direction of $\R^d$.  Analogously to the classical Sobolev spaces, one may define spaces of functions with $L^p$ symmetric gradients as follows: if $p \in [1,\infty)$, then
\[
	LD^p(\Omega) \coloneqq \set{u \in L^p(\Omega;\R^d)}{Eu \in L^p(\Omega;M^{d \times d}_\sym)}
\] is the space of $p$-integrable vector-fields $u$ such that $Eu$ can be represented by a $p$-integrable $M^{d\times d}_\sym$-valued field on $\Omega$.\footnote{Or simply $LD(\Omega)$ when $p = 1$.} The introduction of more general spaces, where one considers functions with symmetric gradients represented by a symmetric matrix-valued Radon measure, has been a crucial landmark in the understanding of plasticity and fracture models in linear elasticity (we refer the interested reader to~\cite{anz,matt,kohntemam,suquet,temam1} and references therein for a deeper discussion on this topic). Precisely for such purposes, Christiansen, Matthies and Strang~\cite{matt} and Suquet~\cite{suquet} independently introduced the space
\[
	BD(\Omega) \coloneqq \set{u \in L^1(\Omega;\R^d)}{Eu \in \Mcal_b(\Omega;M^{d \times d}_\sym)},
\]
of \emph{functions with bounded deformation} over $\Omega$, which consists of all integrable vector-fields $u : \Omega \to \R^d$ such that {their distributional symmetric gradient}  $Eu$ can be represented by a $M^{d\times d}_\sym$-valued \emph{bounded Radon measure}. 

Functions in $BD(\Omega)$ possess \emph{similar} functional and fine properties to the {ones} exhibited by functions in $BV(\Omega;\R^d)$ (see, e.g., \cite{ACDM,baba,haj,Kohn}). Here, 
	\[
	BV(\Omega;\R^d) =  \set{u \in L^1(\Omega;\R^d)}{Du \in \Mcal_b(\Omega;M^{d \times d})},
	\]
	is the space of \emph{vector-fields with bounded variation} over $\Omega$. However, the kernel of $E$ is strictly larger than the kernel of the gradient operator $D$. This property substantially separates these two operators from a functional viewpoint. Indeed, a vector-field $u : \Omega \to \R^d$ satisfies $Eu = 0$ in the sense of distributions on $\Omega$ if and only if $u$ is a rigid motion, i.e., $u = Rx + c$, where $R \in M^{d\times d}_\skw$ is a $(d \times d)$ skew symmetric matrix and $c \in \R^d$. For this reason, one cannot expect, in general, to control $Du$ in terms of $Eu$ alone. In order to control the $L^p$ norm of $Du$ in terms of the one of $Eu$, one has to translate by all possible rigid motions (modulo constant displacements). This reasoning applies \emph{only} when we restrict ourselves to the range $p \in (1,\infty)$, as it is reflected in the following version of Korn's inequality
\[
	\inf_{R \in M^{n\times n}_\skw} \|Du - R\|_{L^p(\Omega)} \le K(\Omega,p)  \|Eu\|_{L^p(\Omega)}, \qquad p \in (1,\infty).\footnote{This (rigidity) version is a consequence of Korn's second inequality 
		\[
		\|u\|_{W^{1,p}(\Omega)} \le C \left(\|u\|_{L^p(\Omega)} + \|Eu\|_{L^p(\Omega)}\right),
		\]
		whose first proof is arguably contained as a particular case of the coercive estimates established by Smith~\cite{smith}.}
\] 
As a consequence, for $p > 1$, the definition of $LD^p(\Omega)$ is superfluous from a functional point of view, as it is straightforward to verify that $LD^p(\Omega)$ coincides with the Sobolev space $W^{1,p}(\Omega;\R^d)$. On the other hand, as $p \to~1^+$, the optimal constant $K(\Omega,p)$ in Korn's inequality blows-up to infinity and this points at the fact that the symmetric gradient and the gradient \emph{are truly different} operators from a functional perspective. This is formalized through Ornstein's non-inequality~\cite{ornstein}, which conveys that neither $LD(\Omega)$ embeds into $W^{1,1}(\Omega;\R^d)$, nor $BD(\Omega)$ embeds into the space $BV(\Omega;\R^d)$. 

The goal of this paper is to prove a limiting non-local integral formula for a total variation semi-norm of the symmetric gradient of an integrable vector-field, which avoids the direct manipulation of the distributions in~\eqref{eq:E}. The results presented here are inspired by similar formulas for gradients first established by Bourgain, Brezis and Mironescu for functions in $W^{1,p}(\Omega)$, by D\'avila for $BV(\Omega)$, and for the symmetric gradient operator by Mengesha for functions $W^{1,p}(\Omega;\R^d)$. Therefore, to contextualize and motivate our findings, we shall first recall the theory for gradients.

\subsection{Background theory for the gradient operator}In~\cite{BBM}, Bourgain, Brezis and Mironescu established the following (BBM) limiting difference quotient representation for Sobolev functions: if  $u \in W^{1,p}(\Omega)$ for some $p \in [1,\infty)$, then 
\begin{equation}\label{eq:BBM}
\lim_{\eps \to 0^+} \iint_{\Omega \times \Omega} \frac{|u(y) - u(x)|^p}{|y - x|^p} \, \rho_\eps(y-x) \,d y \,d x= K_{p,d} \int_\Omega |\nabla u|^p,
\end{equation}
	where {$K_{p,d}$ is a positive constant depending on $p,d$ and}  $\{\rho_\eps\}_{\eps > 0} \subset L^1(\R^d)$ is a family of  non-negative radial probability mollifiers 
	\begin{equation}\label{eq:eps1}
		\|\rho_\eps\|_{L^1(\R^d)}  = 1, \qquad \rho_\eps(x) = \hat \rho_\eps(|x|),
	\end{equation}
	which approximate the Dirac mass at zero in the sense that
	\begin{equation}\label{eq:eps}
		\lim_{\eps \to 0^+} \|\rho_\eps\|_{L^1(\R^d \setminus B_\delta)} 
		= 0 \qquad \text{for all} \; \delta > 0.
	\end{equation}
The authors also show that the converse holds in the range $p \in (1,\infty)$. More precisely,  that if $u \in L^p(\Omega)$ and 
\begin{equation}\label{eq:limsup}
	\liminf_{\eps \to 0^+} \iint_{\Omega \times \Omega} \frac{|u(y) - u(x)|^p}{|y - x|^p} \, \rho_\eps(y-x) \,d y \,d x < \infty, 
\end{equation}
then automatically $u \in W^{1,p}(\Omega)$.
The analysis of the limiting case $p = 1$ is more delicate since one must take into account the appearance of mass concentrations in the gradient. In this regard, D\'avila~\cite{D} established a related representation for $BV(\Omega)$. He proved that $u \in BV(\Omega)$ if and only if $u \in L^1(\Omega)$ and~\eqref{eq:limsup} holds with $p = 1$; in that case the limit exists and is given by
\begin{equation}\label{eq:D}
\lim_{\eps \to 0^+} \iint_{\Omega \times \Omega} \frac{|u(y) - u(x)|}{|y - x|} \, \rho_\eps(y-x) \,d y \, d x = K_{1,d} |Du|(\Omega),
\end{equation}
where $|Du|$ is the total variation measure associated to $Du \in \Mcal_b(\Omega;\R^d)$. 

\subsection{Background theory for the symmetric gradient}Now that we have recalled the representations for Sobolev and bounded variation functions, we shall center on the theory concerning the symmetric gradient operator. In order to do this, we shall first introduce an auxiliary family of $p$-norms in the $M^{d \times d}_\sym$ as follows: for $p \in [1,\infty)$, we set
\[
\Qbf_p(A) \coloneqq \left(\fint_{\Sbb^{d-1}} |\dpr{A\omega,\omega}|^p \, dS(\omega)\right)^\frac{1}{p}, \qquad A \in M^{d \times d}_\sym, 
\]  
where $S$ stands for the surface measure on the unit sphere $\Sbb^{d-1}$ in $\R^d$.
It is easy to check that $\Qbf_p(\cdot)$ defines a norm on $M^{d \times d}_\sym$ (more details will be given in Sect.~\ref{sec:Q}). 
Furthermore, for $p \in [1,\infty)$, a vector-field $u \in L^p(\Omega; \R^d)$ and a Borel set $U \subset \R^d$, we introduce the following short-hand notation:
\begin{equation*}
	\mathscr F_{p,\e}(u,U) :=  \iint_{U \times U}   \frac{\vert \langle u(y) - u(x),y-x \rangle |^p}{|y - x|^{2p}} \, \rho_\varepsilon(y-x) \, dy\, dx, 
\end{equation*}
where the right-hand side is well-defined as the integral of a non-negative function and may take the value $\infty$. 

In~\cite[Theorem~2.2]{mengesha} Mengesha proved (under slightly more restrictive assumptions) the following analogue of~\eqref{eq:BBM} for the symmetric gradient and functions in $W^{1,p}(\Omega;\R^d)$:

\begin{theorem}\label{thm:m}Let $p \in (1,\infty)$ and let $u \in L^p(\Omega; \R^d)$. Then, $u$ belongs to $W^{1,p}(\Omega;\R^d)$ if and only if 
	\begin{equation}\label{eq:functional_with_p}
		 \liminf_{\varepsilon \to 0^+} \mathscr F_{p,\e}(u,\Omega) < \infty. 
	\end{equation}
	Moreover, the (extended) limit always exists and equals 
		\begin{equation*}
		\lim_{\varepsilon \to 0^+} \iint_{\Omega \times \Omega}   \frac{\vert \langle u(y) - u(x),y-x \rangle |^p}{|y - x|^{2p}} \, \rho_\varepsilon(y-x) \, dy\, dx = \int_\Omega \Qbf_p(Eu(x))^p \,d x,
		\end{equation*}
with the convention that the right-hand side integral equals $\infty$ whenever $u \notin W^{1,p}(\Omega;\R^d)$.\footnote{Notice that our $\Qbf_p$-norms differ by a multiplicative constant $|\Sbb^{d-1}|$ with respect to Mengesha's original norms. This, however, seems to stem from a minor normalization typo.}
\end{theorem}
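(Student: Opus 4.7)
The plan is to reduce to the smooth case, and then propagate it by mollification for the converse direction and by a density/continuity argument for the forward direction. For $v \in C^2(\overline U; \R^d)$ on an arbitrary open set $U \subset \R^d$, I would compute $\lim_{\eps \to 0^+} \mathscr F_{p,\eps}(v,U) = \int_U \Qbf_p(Ev)^p\,dx$ directly from the second order Taylor expansion
\[
\langle v(x+h) - v(x), h\rangle = \langle Ev(x)h, h\rangle + O(|h|^3),
\]
where the contribution of the skew part of $Dv$ drops out because $\langle Rh,h\rangle = 0$ for every $R \in M^{d\times d}_\skw$. After passing to polar coordinates $h = r\omega$, the normalisation $|\Sbb^{d-1}| \int_0^\infty \hat\rho_\eps(r) r^{d-1}\,dr = 1$ and dominated convergence (with majorant $\|Dv\|_\infty^p \rho_\eps$) deliver the claimed value.

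Next, set $u_\delta \coloneqq u * \eta_\delta$ with $\eta_\delta$ a standard radial mollifier and $\Omega_\delta \coloneqq \{x \in \Omega : \dist(x,\partial\Omega) > \delta\}$. Treating $\eta_\delta\,dz$ as a probability measure, Jensen's inequality applied inside $|\langle u_\delta(y) - u_\delta(x), y-x\rangle|^p$ followed by the translation $(x,y) \mapsto (x-z,y-z)$ gives the pointwise monotonicity
\[
\mathscr F_{p,\eps}(u_\delta, \Omega_\delta) \le \mathscr F_{p,\eps}(u, \Omega), \qquad \delta, \eps > 0.
\]
Since $u_\delta$ is smooth, the first paragraph yields $\int_{\Omega_\delta} \Qbf_p(Eu_\delta)^p\,dx \le \liminf_\eps \mathscr F_{p,\eps}(u,\Omega)$. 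If the right-hand side is finite, $(Eu_\delta)_{\delta>0}$ is uniformly bounded in $L^p(\Omega')$ for every $\Omega' \Subset \Omega$; weak compactness in $L^p$ (here $p>1$ is essential) together with the distributional convergence $Eu_\delta \to Eu$ identifies the weak limit with $Eu$, so $Eu \in L^p_\loc(\Omega;M^{d\times d}_\sym)$, and Korn's inequality then upgrades this to $u \in W^{1,p}(\Omega;\R^d)$. Fatou's lemma as $\delta \to 0$ finally produces the sharp lower bound $\int_\Omega \Qbf_p(Eu)^p\,dx \le \liminf_\eps \mathscr F_{p,\eps}(u, \Omega)$.

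For the matching upper bound, I would approximate $u \in W^{1,p}(\Omega;\R^d)$ by $u_n \in C^\infty(\overline\Omega;\R^d)$ with $u_n \to u$ in $W^{1,p}$. Since $v \mapsto \mathscr F_{p,\eps}(v,\Omega)^{1/p}$ is a seminorm (Minkowski's inequality in $L^p(\Omega \times \Omega, \rho_\eps(y-x)\,dy\,dx)$) and the smooth case handles each $u_n$, the task reduces to the $\eps$-uniform continuity estimate
\[
\sup_{\eps > 0} \mathscr F_{p,\eps}(w,\Omega) \le C(\Omega,p)\,\|w\|_{W^{1,p}(\Omega;\R^d)}^p, \qquad w \in W^{1,p}(\Omega;\R^d),
\]
applied to $w = u - u_n$. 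I would obtain this estimate by extending $w$ to $\tilde w \in W^{1,p}(\R^d;\R^d)$ (available because $\partial\Omega$ is Lipschitz), writing $\tilde w(y) - \tilde w(x) = \int_0^1 D\tilde w(x + t(y-x))(y-x)\,dt$ along segments in $\R^d$, and applying Jensen in the $t$-variable together with the polar-coordinate computation of the first paragraph to obtain
\[
\mathscr F_{p,\eps}(w,\Omega) \le \mathscr F_{p,\eps}(\tilde w,\R^d) \le \int_{\R^d} \Qbf_p(E\tilde w)^p\,dx \le C\|w\|_{W^{1,p}(\Omega)}^p.
\]
The main obstacle lies precisely here: the non-convexity of $\Omega$ prevents representing $u(y) - u(x)$ as a straight-line integral of $Du$ intrinsically in $\Omega$, forcing the detour through an extension to $\R^d$; the resulting global constant $C(\Omega,p)$ is harmless because it multiplies $\|u - u_n\|_{W^{1,p}}^p \to 0$. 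Taking $\limsup_\eps$ then $n \to \infty$ produces $\limsup_\eps \mathscr F_{p,\eps}(u,\Omega) \le \int_\Omega \Qbf_p(Eu)^p\,dx$, which combined with the previous paragraph completes the proof.
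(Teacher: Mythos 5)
Your overall architecture mirrors the paper's: a smooth-case computation, the Jensen/mollification inequality $\mathscr F_{p,\eps}(u_\delta,\Omega_\delta) \le \mathscr F_{p,\eps}(u,\Omega)$ for the lower bound (this is essentially the paper's Lemma~\ref{lemma:convolutions}), and a density argument for the upper bound. The upper bound is where you genuinely diverge: you use the fact that $v \mapsto \mathscr F_{p,\eps}(v,\Omega)^{1/p}$ is a seminorm, approximate $u$ by smooth $u_n$ in $W^{1,p}(\Omega)$, and control the error term by the $\eps$-uniform estimate $\mathscr F_{p,\eps}(w,\Omega) \le \int_{\R^d}\Qbf_p(E\tilde w)^p\,dx \le C\|w\|_{W^{1,p}(\Omega)}^p$. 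The paper instead never needs the exact limit for smooth functions: Lemma~\ref{lem:1} gives an $\eps$-uniform bound $\int_U \mu_{p,\eps}^p \le [Eu]_p(U_R)^p + \text{error}(R,\eps)$ with $U_R = U+B_R$, and the upper bound follows by extending $u$, passing to smooth approximations, and sending $\eps\to 0$ then $R\to 0$. Your route is conceptually cleaner but it makes the entire proof hinge on the smooth case being established as a two-sided limit on $\Omega$ itself, whereas the paper only ever needs a one-sided bound on each side.

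This is where the gap lies. You claim to prove $\lim_\eps \mathscr F_{p,\eps}(v,U) = \int_U \Qbf_p(Ev)^p\,dx$ for $v \in C^2(\overline U)$ on an \emph{arbitrary} open $U$ via Taylor expansion and dominated convergence ``with majorant $\|Dv\|_\infty^p\rho_\eps$.'' That majorant controls the inner integral over $h$ for fixed $x$, but the outer integral over $x$ requires an $\eps$-independent $L^1(U)$ majorant for $x \mapsto \int_{U-x}\frac{|\langle v(x+h)-v(x),h\rangle|^p}{|h|^{2p}}\rho_\eps(h)\,dh$. The natural candidate, the constant $\|Dv\|_\infty^p$, is not integrable when $U$ is unbounded---and the paper explicitly does not assume $\Omega$ bounded. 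Concretely, your upper bound step needs $\limsup_\eps \mathscr F_{p,\eps}(u_n,\Omega) \le \int_\Omega \Qbf_p(Eu_n)^p\,dx$, and your $\eps$-uniform estimate only delivers $\le \int_{\R^d}\Qbf_p(E\tilde u_n)^p\,dx$, which involves the extension on the full space and is strictly larger. To close this you would have to either (i) replace dominated convergence by a Vitali/generalised-DCT argument using the $L^1$-normalised family $\int_{\R^d}\int_0^1 |Dv(x+th)|^p\,dt\,\rho_\eps(h)\,dh$, whose $L^1$-norm tends to $\|Dv\|_{L^p}^p$, or (ii) adopt the paper's device of enlarging $U$ to $U_R$ and controlling the boundary layer $[Ev]_p(U_R \setminus U)$ as $R\to 0$, which is precisely what Lemma~\ref{lem:1} is designed to do. Your lower bound (Jensen, weak $L^p$-compactness, identification via distributional convergence, Korn, then lower semicontinuity of $[\cdot]_p$ and monotone exhaustion) is correct modulo replacing ``Fatou'' by weak lower semicontinuity of the convex functional $[\cdot]_p$, and only needs the smooth lower bound on compactly contained sets, so it is not affected by the unboundedness issue.
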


Following Dávila's ideas, and as a direct consequence of Mengesha's representation, one has the following strong compactness and convergence result (which is somehow implicit in~\cite{mengesha}):

\begin{corollary}\label{cor:p}
	Let $p \in (1,\infty)$ and let $u \in L^p(\Omega;\R^d)$. The following are equivalent:
	\begin{enumerate}
		\item[(a)]$u \in W^{1,p}(\Omega;\R^d)$,
		\item[(b)] the family of functions
		\[
		\mu_{p,\eps}(x) \coloneqq \left(\int_\Omega \frac{\vert \langle u(y) - u(x),y-x \rangle |^p}{|y - x|^{2p}} \, \rho_\varepsilon(y-x) \, dy\right)^\frac{1}{p}, \quad \eps \in (0,1),
		\]
		is uniformly bounded in $L^p(\Omega)$,
		\item[(c)] $\mu_{p,\eps} \longrightarrow \Qbf_p(Eu)$ in $L^p(\Omega)$ as $\eps \to 0^+$.
	\end{enumerate}
\end{corollary}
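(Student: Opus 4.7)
The equivalence (a)~$\Leftrightarrow$~(b) is immediate from Theorem~\ref{thm:m}: since $\|\mu_{p,\eps}\|_{L^p(\Omega)}^p = \mathscr F_{p,\eps}(u,\Omega)$, uniform boundedness of $\{\mu_{p,\eps}\}_{\eps\in(0,1)}$ in $L^p(\Omega)$ is equivalent to $\liminf_\eps \mathscr F_{p,\eps}(u,\Omega) < \infty$, which by Theorem~\ref{thm:m} characterises $u \in W^{1,p}(\Omega;\R^d)$. The implication (c)~$\Rightarrow$~(b) is trivial (convergent sequences are bounded). The content of the corollary is therefore the implication (a)~$\Rightarrow$~(c).

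For this, my plan is to argue by smooth approximation. Using the Lipschitz regularity of $\Omega$, pick $u_k \in C^\infty(\bar\Omega;\R^d)$ with $u_k \to u$ in $W^{1,p}(\Omega;\R^d)$ and apply the triangle inequality
\begin{equation*}
	\|\mu_{p,\eps}[u] - \Qbf_p(Eu)\|_{L^p} \le \|\mu_{p,\eps}[u] - \mu_{p,\eps}[u_k]\|_{L^p} + \|\mu_{p,\eps}[u_k] - \Qbf_p(Eu_k)\|_{L^p} + \|\Qbf_p(Eu_k) - \Qbf_p(Eu)\|_{L^p}.
\end{equation*}
The third term is bounded by $C\|Eu_k - Eu\|_{L^p(\Omega)}$ because $\Qbf_p$ is a norm on $M^{d\times d}_\sym$ (hence Lipschitz on matrices), and therefore vanishes as $k \to \infty$.

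For the first term, a reverse-triangle application of Minkowski's inequality in $L^p(\Omega,\rho_\eps(y-\sbullet)\,dy)$ yields the pointwise bound $|\mu_{p,\eps}[u](x) - \mu_{p,\eps}[u_k](x)| \le \mu_{p,\eps}[u-u_k](x)$. Using the Cauchy--Schwarz bound $|\langle v(y)-v(x),y-x\rangle| \le |v(y)-v(x)|\,|y-x|$ and the classical BBM-type uniform estimate (proved via $|v(y)-v(x)| \le |y-x|\int_0^1 |\nabla \tilde v(x + t(y-x))|\,dt$ after extending $v$ to $\tilde v \in W^{1,p}(\R^d;\R^d)$, which is possible since $\Omega$ is Lipschitz) one controls $\|\mu_{p,\eps}[u-u_k]\|_{L^p(\Omega)} \le C\|u-u_k\|_{W^{1,p}}$ \emph{uniformly in $\eps$}. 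For the second term, a Taylor expansion of the smooth field $u_k$ and polar integration (using the radial structure of $\rho_\eps$ and the cancellation $\langle Du_k(x)(y-x), y-x\rangle = \langle Eu_k(x)(y-x), y-x\rangle$ coming from the skew part having zero quadratic form) provide the pointwise limit $\mu_{p,\eps}[u_k](x) \to \Qbf_p(Eu_k(x))$ for every $x \in \Omega$; combining this with the $L^p$-norm convergence $\|\mu_{p,\eps}[u_k]\|_{L^p}^p \to \|\Qbf_p(Eu_k)\|_{L^p}^p$ furnished by Theorem~\ref{thm:m} applied to $u_k$, the Brezis--Lieb lemma upgrades the pointwise convergence to strong $L^p$ convergence. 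Sending first $\eps \to 0^+$ and then $k \to \infty$ closes the argument.

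The main obstacle I foresee is the uniform (in $\eps$) BBM-type estimate required to dominate the first term by $C\|u-u_k\|_{W^{1,p}}$: it cannot be extracted directly from Theorem~\ref{thm:m} (which only supplies convergence of $L^p$-norms for fixed $v$), and its proof relies essentially on the Lipschitz regularity of $\partial\Omega$ via the existence of a bounded extension operator $W^{1,p}(\Omega;\R^d) \to W^{1,p}(\R^d;\R^d)$.
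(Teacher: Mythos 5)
Your argument is correct, but it takes a genuinely different route from the paper's. You rightly identify (a)~$\Rightarrow$~(c) as the substantive implication. The authors obtain it by combining Corollary~\ref{cor:conv_measures} with Theorem~\ref{thm:m}: the measures $\mu_{p,\eps}^p\,\mathscr L^d$ are weak* sequentially pre-compact, every limit point has density $g\,\Qbf_p(Eu)^p$ with $0\le g\le 1$, convergence of the total masses then forces $g\equiv 1$, and the resulting weak convergence is upgraded to strong $L^p$-convergence using the convergence of $L^p$-norms. You bypass Corollary~\ref{cor:conv_measures} entirely and argue instead by a three-term triangle inequality against a $W^{1,p}$-smooth approximant $u_k$, controlling the cross term through the uniform-in-$\eps$ bound $\|\mu_{p,\eps}[v]\|_{L^p(\Omega)}\le C\|v\|_{W^{1,p}(\Omega)}$. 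That bound does hold, and you are right that it is precisely where the Lipschitz regularity of $\partial\Omega$ enters: it follows either from the Cauchy--Schwarz/Jensen/extension argument you sketch, or, equivalently, from Lemma~\ref{lem:1} applied to a $W^{1,p}(\R^d)$-extension of $v$, noting that the remainder $\frac{2}{R^p}\|v\|_{L^p}^p\int_{\R^d\setminus B_R}\rho_\eps$ is bounded by $\frac{2}{R^p}\|v\|_{L^p}^p$ uniformly in $\eps$. The middle term is then handled by the pointwise limit $\mu_{p,\eps}[u_k](x)\to\Qbf_p(Eu_k(x))$ for every $x\in\Omega$ (which is the same computation that appears in Claim~1 of the proof of Lemma~\ref{lemma:smooth_lower_bound}), the norm convergence from Theorem~\ref{thm:m} applied to $u_k$, and the Brezis--Lieb lemma. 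What your version buys is a direct, quantitative error estimate $\limsup_{\eps\to 0^+}\|\mu_{p,\eps}[u]-\Qbf_p(Eu)\|_{L^p}\le C\|u-u_k\|_{W^{1,p}}$ at each approximation stage, avoiding the weak*-compactness machinery; the price is that the argument is tailored to the reflexive range $p>1$, whereas the paper's measure-valued route is deliberately set up to run in parallel with the $p=1$ case (Corollary~\ref{cor:BD}), where strong $L^1$-convergence fails and only the weak* statement with strict mass convergence survives.
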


\begin{remark}[An alternative proof]
	For the convenience of the reader and since our proofs depart in crucial points from the ones given by Bourgain, Brezis and Mironescu, Davila and Mengesha, we have decided to include here the proof of Theorem~\ref{thm:m} and of Corollary \ref{cor:p}. Notice also that we do not require $\Omega$ to be bounded in any of these or the forthcoming results.
\end{remark}

In addition to Theorem~\ref{thm:m}, Mengesha proved the following criterion for functions of bounded deformation in terms of the symmetric difference quotient energy: a map $u$ belongs to $BD(\Omega)$ if and only if $u \in L^1(\Omega;\R^d)$ and~\eqref{eq:functional_with_p} is finite for $p = 1$. More precisely, he showed that there exist positive constants $\gamma_1,\gamma_2$ satisfying
\begin{equation}\label{eq:m2}
	\begin{split}
		\gamma_1 \|u\|_{BD(\Omega)}  & \le \liminf_{\eps \to 0^+} \mathscr F_{1,\e}(u,\Omega) \\
		& \le \limsup_{\eps \to 0^+} \mathscr F_{1,\e}(u,\Omega) \le \gamma_2\|u\|_{BD(\Omega)}, 
	\end{split}
\end{equation}
with the convention that these norms may take the value $\infty$ and where
 $\|u\|_{BD(\Omega)} \coloneqq \| u\|_{L^1(\Omega)} + |Eu|(\Omega)$ is the standard norm in $BD(\Omega)$ (see below). As it is already suggested by~\eqref{eq:D} and~\eqref{eq:m2}, the analysis and characterization of~\eqref{eq:functional_with_p} (with $p = 1$), requires one to relax the the statement to functions with bounded deformation, rather than to elements of $LD(\Omega)$ or $W^{1,1}(\Omega)$. In other words, the sufficiency of the first statement of Theorem~\ref{thm:m} fails for $p=1$ because one must take into account the appearance of mass concentrations on the symmetric gradient. 

\subsection{Main results}In order to state our results, we need to recall the following basic geometric measure theory facts: if $u \in BD(\Omega)$, then $Eu$ is a bounded $M^{d\times d}_\sym$-valued Radon measure and hence, by Riesz' representation theorem and the Radon--Nikod\'ym differentiation theorem, we may write $Eu$ in polar form
\[
Eu = e_u \,|Eu| \qquad \text{as measures on $\Omega$,}
\]
where $|Eu| \in \Mcal^+(\Omega)$ is the total variation measure of $Eu$ (induced by the classical Frobenius inner product of matrices)
and
\[
e_u(x) \coloneqq \frac{d\, Eu}{d\, |Eu|}(x) = \lim_{r \to0^+} \frac{Eu(B_r(x))}{|Eu|(B_r(x))}, \qquad x\in \Omega,
\]
is a norm-1 density function in $L^\infty(\Omega,|Eu|;M^{d \times d}_\sym)$. We may then define the $\Qbf_1$-total variation measure of $Eu$ as 
\[
[Eu](U) \coloneqq \int_U \Qbf_1(e_u(x)) \, d|Eu|(x), \qquad \text{$U\subset \Omega$ Borel.}
\]

Having set this notation, we are finally ready to state our main result:
	\begin{theorem}\label{thm:1} 
		Let $u \in L^1(\Omega; \R^d)$. Then, the (extended) limit
		\[
			\lim_{\eps \to 0^+} \mathscr F_{1,\eps}(u,\Omega)  \in [0,\infty]
		\]
		always exists and equals
		\begin{equation*}
			\lim_{\varepsilon \to 0^+} \iint_{\Omega \times \Omega}   \frac{\vert \langle u(y) - u(x),y-x \rangle |}{|y-x|^2} \, \rho_\varepsilon(y-x) \, dy\,dx = [Eu](\Omega),
		\end{equation*}
		with the convention that the right-hand side equals $\infty$ whenever $u \notin BD(\Omega)$.
			\end{theorem}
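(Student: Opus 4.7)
The proof splits according to whether $u \in BD(\Omega)$. The non-$BD$ case is immediate from Mengesha's lower bound~\eqref{eq:m2}: when $\|u\|_{BD(\Omega)} = +\infty$, one has $\liminf_{\varepsilon \to 0^+} \mathscr F_{1,\varepsilon}(u, \Omega) \geq \gamma_1 \|u\|_{BD(\Omega)} = +\infty$, so the extended limit exists and equals $+\infty$, matching the convention on the right-hand side. The substance of the argument therefore concerns $u \in BD(\Omega)$, which I would attack by polar coordinates followed by one-dimensional slicing.

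\textbf{Polar-coordinate reduction.} Using radiality of $\rho_\varepsilon$ and the substitution $y = x + r\omega$, rewrite
\[
\mathscr F_{1,\varepsilon}(u, \Omega) = \int_0^\infty \hat\rho_\varepsilon(r)\, r^{d-1} G(r)\, dr, \qquad G(r) \coloneqq \int_{\Sbb^{d-1}} \Phi_u(r, \omega)\, dS(\omega),
\]
with
\[
\Phi_u(r, \omega) \coloneqq \int_{\Omega \cap (\Omega - r\omega)} \frac{|\langle u(x + r\omega) - u(x), \omega\rangle|}{r}\, dx.
\]
The probability normalization~\eqref{eq:eps1} translates to $|\Sbb^{d-1}|\int_0^\infty \hat\rho_\varepsilon(r) r^{d-1}\, dr = 1$, and~\eqref{eq:eps} forces the measures $\hat\rho_\varepsilon(r) r^{d-1}\, dr$ on $(0, \infty)$ to concentrate at $r = 0$. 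Hence the task reduces to establishing the radial limit
\[
\lim_{r \to 0^+} G(r) = |\Sbb^{d-1}|\cdot [Eu](\Omega),
\]
together with a uniform-in-$r$ bound on $G$, so that bounded convergence against $\hat\rho_\varepsilon(r) r^{d-1}\, dr$ concludes.

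\textbf{Slicing.} For fixed $\omega \in \Sbb^{d-1}$, set $v_\omega(x) \coloneqq \langle u(x), \omega\rangle$. Since $\omega \otimes \omega$ is symmetric, the bilinear form $\omega_i \omega_j \partial_i u_j$ only sees the symmetric part of $Du$, giving the distributional identity $\partial_\omega v_\omega = \langle Eu\, \omega, \omega\rangle$ as scalar signed Radon measures on $\Omega$. Disintegrating along $\omega$-slices via Fubini converts $\Phi_u(r, \omega)$ into a fiberwise one-dimensional $L^1$-modulus of continuity. Combining the Ambrosio--Coscia--Dal Maso $BD$-slicing theorem~\cite{ACDM}, which ensures that $t \mapsto v_\omega(x' + t\omega) \in BV$ on $\Hcal^{d-1}$-a.e. $\omega$-line and that the fiber variations integrate back to $|\langle Eu\, \omega, \omega\rangle|(\Omega)$ over $\omega^\perp$, with the classical identity $\lim_{r \to 0^+}\tfrac{1}{r}\int_{\R} |f(t+r) - f(t)|\, dt = |Df|(\R)$ for $f \in BV(\R)$, yields the pointwise limit
\[
\lim_{r \to 0^+} \Phi_u(r, \omega) = |\langle Eu\, \omega, \omega\rangle|(\Omega) = \int_\Omega |\langle e_u(x)\omega, \omega\rangle|\, d|Eu|(x),
\]
together with the uniform bound $\Phi_u(r, \omega) \leq |Eu|(\Omega)$ coming from the fiberwise inequality $\int |f(t+r) - f(t)|\, dt \leq r |Df|(\R)$. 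Dominated convergence in $\omega$, Fubini, and the definition of $\Qbf_1$ then produce $\lim_{r \to 0^+} G(r) = |\Sbb^{d-1}|\,[Eu](\Omega)$, closing the proof.

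\textbf{Main obstacle.} I expect the delicate step to be precisely the slicing identity for $v_\omega$: unlike D\'avila's $BV$ setting, the scalar projection $v_\omega$ need not belong to $BV(\Omega)$ in general; only its $\omega$-directional derivative is a finite Radon measure. One must therefore appeal to a $BD$-specific slicing/disintegration result to secure both the fiberwise $BV$-regularity of $t \mapsto v_\omega(x' + t\omega)$ and the correct disintegration identity $\int_{\omega^\perp} |D_t v_\omega(x' + \cdot\,\omega)|\, d\Hcal^{d-1}(x') = |\langle Eu\,\omega, \omega\rangle|(\Omega)$. Once this structural input is in place, every remaining step is essentially a routine application of Fubini and bounded/dominated convergence.
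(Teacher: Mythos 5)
Your proposal takes a genuinely different route from the paper. The paper proves the upper bound by establishing an $\varepsilon$-independent estimate for smooth fields (Lemma~\ref{lem:1}), then passing to $BD$ via the boundary-neutral extension operator and Reshetnyak continuity along strictly converging mollifications; the lower bound is obtained by a Taylor expansion for $C^2$ functions (Lemma~\ref{lemma:smooth_lower_bound}) combined with the Jensen-type convexity of $\mathscr F_{p,\eps}$ under convolution (Lemma~\ref{lemma:convolutions}), plus lower semicontinuity of the $\Qbf_p$-variation. No slicing or disintegration appears anywhere in the paper; indeed the introduction explicitly advertises that the mollification argument ``dispenses with the need of performing certain technical measure theoretic density arguments'' present in D\'avila's work. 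Your polar/slicing strategy is precisely D\'avila's original scheme transplanted to $BD$ via the Ambrosio--Coscia--Dal Maso slicing theorem, so the two proofs are structurally disjoint: the paper trades slicing for convexity-under-mollification, while you trade the extension/approximation machinery for one-dimensional fiber arguments. Your handling of the $u\notin BD(\Omega)$ case by citing Mengesha's~\eqref{eq:m2} is logically sound but external; the paper derives this endpoint internally from the lower bound.

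There is, however, a concrete gap in the slicing step that you have somewhat underestimated. The classical identity $\lim_{r\to 0^+}\tfrac1r\int_{\R}|f(t+r)-f(t)|\,dt=|Df|(\R)$ and the a priori bound $\int|f(t+r)-f(t)|\,dt\le r|Df|(\R)$ hold for $f\in BV(\R)$ on the \emph{whole line}, but the fibers $v_\omega(x'+\cdot\,\omega)$ live on the open sets $\Omega_{\omega,x'}\subset\R$, which are generally disconnected. On such sets neither the identity nor the domination survives: for example, on $I=(0,1)\cup(1,2)$ with $f=\1_{(1,2)}$ one has $|Df|(I)=0$ yet $\tfrac1r\int_{I\cap(I-r)}|f(t+r)-f(t)|\,dt\ge 1$ for every small $r>0$, because pairs $(t,t+r)$ straddle the degenerate gap at $t=1$. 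Such ``touching'' slices do occur on Lipschitz domains (e.g.\ slicing $\{s>-|t|\}$ horizontally at height $0$), so you must argue that the set of $(x',\omega)$ producing them is $\Hcal^{d-1}\otimes S$-negligible, or otherwise re-derive both the fiberwise limit and the majorant needed for dominated convergence with the correct restriction $\Omega_{\omega,x'}\cap(\Omega_{\omega,x'}-r)$. This is exactly the class of measure-theoretic density argument that D\'avila carries out and that the paper's mollification proof is designed to sidestep; without it, both your claimed pointwise limit for $\Phi_u(r,\omega)$ and the bound $\Phi_u(r,\omega)\le|Eu|(\Omega)$ are unjustified.
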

		In particular, we obtain the following strengthening of~\eqref{eq:m2}:
		\begin{corollary}
			There exists a dimensional constant $C_{d}>0$ such that  
			\begin{equation*}
				C_{d}|Eu|(\Omega) \le \lim_{\eps \to 0^+} \mathscr F_{1,\e}(u,\Omega) \le |Eu|(\Omega),
		\end{equation*}
	under the convention that the semi-norms may attain the value $\infty$.
			\end{corollary}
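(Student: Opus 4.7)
The plan is to read the corollary off Theorem~\ref{thm:1} by reducing it to a purely algebraic comparison between the norm $\Qbf_1$ and the Frobenius norm on $M^{d\times d}_\sym$. If $u \notin BD(\Omega)$ there is nothing to show, since both sides are $+\infty$ by the convention in Theorem~\ref{thm:1}. Otherwise, that theorem yields
\[
\lim_{\eps\to 0^+}\mathscr F_{1,\eps}(u,\Omega) \;=\; [Eu](\Omega) \;=\; \int_\Omega \Qbf_1(e_u(x))\,d|Eu|(x),
\]
and, because $e_u$ is the Radon--Nikod\'ym density of $Eu$ with respect to its Frobenius total variation $|Eu|$, it satisfies $|e_u(x)| = 1$ for $|Eu|$-a.e. $x$. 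Hence the corollary becomes equivalent to the pointwise matrix inequality
\[
C_d \;\le\; \Qbf_1(A) \;\le\; 1 \qquad \text{for every } A \in M^{d\times d}_\sym \text{ with } |A|=1.
\]

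For the upper bound I would use the elementary chain $|\langle A\omega,\omega\rangle|\le \|A\|_{op}\le |A|$ valid for every $\omega \in \Sbb^{d-1}$; averaging against the normalized surface measure on the sphere immediately gives $\Qbf_1(A)\le |A|=1$, which integrated against $d|Eu|$ provides the sharp upper bound $[Eu](\Omega)\le |Eu|(\Omega)$.

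For the lower bound the key ingredient is that $\Qbf_1$ is a norm on the finite-dimensional space $M^{d\times d}_\sym$ (as asserted and discussed in Sect.~\ref{sec:Q}); consequently $\Qbf_1$ and $|\cdot|$ are equivalent, and the continuous, strictly positive map $A \mapsto \Qbf_1(A)$ attains a minimum $C_d>0$ on the compact Frobenius unit sphere. Integrating $\Qbf_1(e_u)\ge C_d$ against $d|Eu|$ closes the argument.

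The proof is essentially immediate once Theorem~\ref{thm:1} is in place, so I do not expect any real obstacle; the only substantive input is the fact that $\Qbf_1$ is a bona-fide norm (so that non-degeneracy forces the minimum on the unit sphere to be strictly positive), together with the correct normalization $|e_u|\equiv 1$ from the polar decomposition of $Eu$.
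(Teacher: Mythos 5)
Your proposal is correct and follows essentially the same route as the paper: read the corollary off Theorem~\ref{thm:1} together with the norm equivalence $C_{d}|A|\le \Qbf_1(A)\le |A|$ on $M^{d\times d}_\sym$, which the paper records in Sect.~\ref{sec:Q}. Your argument for the two sides of that equivalence (Cauchy--Schwarz for the upper bound, compactness of the Frobenius unit sphere for the lower) is the standard one the paper itself alludes to, and your handling of the $u\notin BD(\Omega)$ case via the $\infty$ convention is also in order.
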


\begin{remark}{One can draw} a parallelism between the $BV$-theory and the $BD$-theory in the following way: Theorem~\ref{thm:1} extends Theorem~\ref{thm:m} to $BD(\Omega)$, just as Dávila's representation~\eqref{eq:D} extends~\eqref{eq:BBM} to $BV(\Omega)$.
\end{remark} 

\begin{remark}
	Notice that {if} $u \in LD(\Omega)$, we still get 
	\begin{equation*}
		\lim_{\varepsilon \to 0^+} \iint_{\Omega \times \Omega}   \frac{\vert \langle u(y) - u(x),y-x \rangle |}{|y - x|^{2}} \, \rho_\varepsilon(y-x) \, dy\, dx = \int_\Omega \Qbf_1(Eu(x)) \,d x.
	\end{equation*}

\end{remark}

In general, Corollary~\ref{cor:p} does not have an $L^1$-convergence analog. However, we can still deduce the following compactness and strict convergence (in the sense of measures) results:
\begin{corollary}\label{cor:BD}
	Let $u \in L^1(\Omega;\R^d)$. The following are equivalent:
	\begin{enumerate}
		\item[(a)]$u \in BD(\Omega)$,
		\item[(b)] the family of functions
		\[
		\mu_{1,\eps}(x) \coloneqq \int_\Omega \frac{\vert \langle u(y) - u(x),y-x \rangle |}{|y - x|^{2}} \, \rho_\varepsilon(y-x) \, dy, \quad \eps \in (0,1),
		\]
		is uniformly bounded in $L^1(\Omega)$,
		\item[(c)] $\mu_\eps \,\mathscr L^d \toweakstar  [Eu]$ as measures in $\Mcal(\Omega)$ and
		\[
			\mu_{1,\eps}(\Omega) \longrightarrow [Eu](\Omega) \qquad \text{as $\eps \to 0^+$}.
		\]
	\end{enumerate}
\end{corollary}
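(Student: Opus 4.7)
The plan is to verify the cycle (c)~$\Rightarrow$~(b)~$\Rightarrow$~(a)~$\Rightarrow$~(c). Implication (c)~$\Rightarrow$~(b) is essentially definitional: weak-$*$ convergence of $\mu_{1,\eps}\mathscr L^d$ to the finite measure $[Eu]$, together with the stated total-mass convergence, forces a uniform $L^1$ bound on $\{\mu_{1,\eps}\}$. Implication (b)~$\Rightarrow$~(a) uses Fubini to identify $\|\mu_{1,\eps}\|_{L^1(\Omega)} = \mathscr F_{1,\eps}(u,\Omega)$; Theorem~\ref{thm:1} and its convention then translate the uniform bound into $u \in BD(\Omega)$.

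The substantive implication is (a)~$\Rightarrow$~(c). The convergence of total masses $\mu_{1,\eps}(\Omega) \to [Eu](\Omega)$ is once again Fubini combined with Theorem~\ref{thm:1}. For the weak-$*$ convergence, the key step is the \emph{local} liminf inequality
\[
\liminf_{\eps \to 0^+}\,(\mu_{1,\eps}\mathscr L^d)(A) \;\geq\; [Eu](A) \qquad \text{for every open } A \subset \Omega.
\]
To establish it, I enlarge the inner integration from $A$ to $\Omega$; since the integrand is non-negative,
\[
(\mu_{1,\eps}\mathscr L^d)(A) \;=\; \iint_{A \times \Omega} \frac{|\langle u(y)-u(x), y-x \rangle|}{|y-x|^2}\,\rho_\eps(y-x)\,dy\,dx \;\geq\; \mathscr F_{1,\eps}(u,A),
\]
and Theorem~\ref{thm:1} applied with $A$ in place of $\Omega$ yields that the right-hand side tends to $[Eu](A)$. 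For open sets $A$ without Lipschitz boundary I proceed by inner approximation with open Lipschitz subdomains and invoke inner regularity of $[Eu]$.

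With the local liminf inequality in hand, weak-$*$ convergence follows by a routine Portmanteau argument. Indeed, subtracting the liminf inequality applied to the open complement $\Omega \setminus F$ from the total-mass convergence yields
\[
\limsup_{\eps \to 0^+}\,(\mu_{1,\eps}\mathscr L^d)(F) \;\leq\; [Eu](F) \qquad \text{for every closed } F \subset \Omega,
\]
and the pair of inequalities is equivalent to narrow convergence $\mu_{1,\eps}\mathscr L^d \to [Eu]$, which in particular implies weak-$*$ convergence in $\mathcal M(\Omega)$.

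The main obstacle will be the use of Theorem~\ref{thm:1} on subdomains $A$ in place of the fixed ambient domain $\Omega$. Because the paper's standing hypothesis requires the ambient domain to be connected with uniformly Lipschitz boundary, one should first restrict to open Lipschitz subdomains, handle multiple connected components separately and sum via countable additivity of $[Eu]$, and then extend to arbitrary open $A$ by inner approximation combined with Fatou's lemma for series of liminfs.
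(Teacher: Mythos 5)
Your proof is correct, and it is close in spirit to the paper's argument but organised differently. The paper's proof of Corollary~\ref{cor:BD} (Section~3.4) goes through Corollary~\ref{cor:conv_measures}: one first extracts a weak* subsequential limit $\mu$ of $\mu_{1,\eps}\,\mathscr L^d$ and shows, using the \emph{local upper bound}~\eqref{eq:cerrado}, that $\mu = g\,[Eu]$ with $0\le g\le 1$; Theorem~\ref{thm:1} then forces $g\equiv 1$, and the conclusion follows since the limit point is unique. You instead prove a \emph{local lower bound} $\liminf_\eps (\mu_{1,\eps}\mathscr L^d)(A) \ge [Eu](A)$ for open $A$, pair it with the total-mass convergence from Theorem~\ref{thm:1}, and conclude via Portmanteau. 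Both routes hinge on the same two ingredients (local control of the defect and global convergence of the masses), but they are dual: the paper localises the upper bound, you localise the lower bound. Your Portmanteau step is slightly more explicit than the paper's terse ``the characterizations imply $g\equiv 1$'', and indeed spells out a point the paper leaves implicit.

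One small remark on your final paragraph: the worry about invoking Theorem~\ref{thm:1} on subdomains $A$ that are not uniformly Lipschitz can be sidestepped. For the local liminf inequality you only need the lower-bound half of the theorem, and the paper's proof of that lower bound (Lemma~\ref{lemma:smooth_lower_bound} combined with Lemma~\ref{lemma:convolutions} and inner monotone approximation by $A_j\Subset A$) does not use any boundary regularity of the domain. So rather than approximating $A$ from inside by Lipschitz connected subdomains and summing over components, you can directly run the lower-bound argument on $A$ itself. Your remedy is valid, but more elaborate than necessary; the part of Theorem~\ref{thm:1} that genuinely needs the Lipschitz hypothesis is the upper bound (it uses the extension operator), which you never apply to a subdomain.
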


We close the exposition of our results with some  consequences of the representation for $BD$-spaces that are inspired by the work of Ponce and Spector~\cite{ponce} on $BV$-spaces. 
Let us recall (see~\cite{ACDM,haj}) that a function $u \in BD(\Omega)$ is approximately differentiable almost everywhere, that is, there exists a measurable matrix-field $x \mapsto \mathrm{ap}\,\nabla u(x) \in M^{d \times d}$ satisfying
\[
	\lim_{r \to 0^+} \fint_{B_r(x)} \frac{|u(y) - u(x) - \mathrm{ap}\,\nabla u(x)[y-x]|}{r} = 0 \quad \text{for $\mathscr L^d$-a.e. $x \in \Omega$.}
\]
In this case, the matrix $\mathrm{ap} \,\nabla u(x)$ is called the \emph{approximate differential} of $u$ at $x$.
On the other hand, it is also well-known (see~\cite{ACDM}) that if $u \in BD(\Omega)$, then $Eu$ can be decomposed into an absolutely continuous and a singular part as 
\[
	Eu = \mathcal Eu\, \mathscr L^d + E^s u, \quad \mathcal Eu(x) \coloneqq \frac 12(\mathrm{ap}\,\nabla u(x) + \mathrm{ap}\,\nabla u(x)^T),
\]
where $|E^s u| \perp \mathscr L^d$.

Following verbatim the ideas contained in~\cite[Sect.~2]{ponce}, we give a criterion for the absolute continuity of symmetric gradient measures in the terms of its \emph{approximate first-order Taylor expansion}:
\begin{corollary}
	Let $u \in BD(\Omega;\R^d)$ and let $\mathrm{ap}\,\nabla u(x) \in M^{d \times d}$ denote the approximate differential of $u$ at  a point $x$, which exists $\mathscr L^d$-almost everywhere in $\Omega$. Then, the extended limit
	\begin{equation*}
		 	\lim_{\varepsilon \to 0} \iint_{\Omega \times \Omega}   	\frac{\vert \dpr{u(y) - u(x) - \mathrm{ap}\,\nabla u(x)[y-x],y-x}|}{|y-x|^2} \, \rho_\varepsilon(y-x) \, dy \, dx
	\end{equation*}
exists and equals $[E^s u](\Omega)$.

In particular, $u \in LD^1(\Omega)$ if and only if
	\[
		\lim_{\varepsilon \to 0} \iint_{\Omega \times \Omega}   	\frac{\vert \dpr{u(y) - u(x) - F(x)[y-x],y-x}|}{|y-x|^2} \, \rho_\varepsilon(y-x) \, dy \, dx = 0
	\]
	for some Borel measurable function $F\colon  \Omega \to M^{d \times d}$.
\end{corollary}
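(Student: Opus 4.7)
\textbf{My plan} is to follow, essentially verbatim, the strategy developed by Ponce and Spector in~\cite[Sect.~2]{ponce} for the analogous $BV$ statement, replacing D\'avila's representation by our Theorem~\ref{thm:1}. The main auxiliary ingredient I would invoke is a Lusin-type approximation for $BD$ functions (available as a consequence of the fine-property analysis in~\cite{ACDM}): for every $\eta>0$ there exist a closed set $K_\eta\subset\Omega$ and a Lipschitz vector-field $w_\eta\in C^1(\R^d;\R^d)$ with $u=w_\eta$ and $\mathrm{ap}\,\nabla u=\nabla w_\eta$ on $K_\eta$, and
\[
\mathscr L^d(\Omega\setminus K_\eta)+|E^s u|(K_\eta)<\eta.
\]

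\textbf{The core step} is the three-term decomposition
\[
u(y)-u(x)-\mathrm{ap}\,\nabla u(x)[y-x]=I_1(x,y)+I_2(x,y)+I_3(x,y),
\]
with $I_1=(u-w_\eta)(y)-(u-w_\eta)(x)$, $I_2=w_\eta(y)-w_\eta(x)-\nabla w_\eta(x)[y-x]$ and $I_3=\bigl(\nabla w_\eta(x)-\mathrm{ap}\,\nabla u(x)\bigr)[y-x]$. Pairing with $(y-x)/|y-x|^2$, integrating against $\rho_\varepsilon$ and using the triangle inequality splits the target limit into three contributions, to be estimated as follows. The $I_1$-part converges to $[E(u-w_\eta)](\Omega)$ by Theorem~\ref{thm:1}, and this differs from $[E^s u](\Omega)$ by $O(\eta)$, since $E(u-w_\eta)$ coincides with $E^s u$ on $K_\eta$ and has total $L^1$-mass of order $\eta$ on $\Omega\setminus K_\eta$. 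The $I_2$-part vanishes as $\varepsilon\to 0^+$: the $C^1$-regularity of $w_\eta$ gives $|I_2(x,y)|=o(|y-x|)$ locally uniformly in $x$, so~\eqref{eq:eps} allows one to pass to the limit by dominated convergence. The $I_3$-part, by the identity $\langle A[z],z\rangle=\langle A_{\mathrm{sym}}[z],z\rangle$ and the radial-kernel computation that already appears in the proof of Theorem~\ref{thm:m}, reduces in the limit $\varepsilon\to 0^+$ to
\[
\int_{\Omega\setminus K_\eta}\Qbf_1\bigl(Ew_\eta-\mathcal E u\bigr)(x)\,dx,
\]
which is $o(1)$ as $\eta\to 0^+$ by equi-integrability of $\mathcal E u\in L^1$ together with a uniform bound on $\nabla w_\eta$ (obtained through a maximal-function truncation, in the spirit of the $BV$ case). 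Sending first $\varepsilon\to 0^+$ and then $\eta\to 0^+$ proves the first displayed identity.

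\textbf{For the characterization of $LD^1$}, the direct implication is immediate on taking $F=\mathrm{ap}\,\nabla u$, since then $E^s u=0$. For the converse, if the displayed limit vanishes for some Borel $F$, the triangle inequality
\[
\bigl|\langle u(y)-u(x),y-x\rangle\bigr|\le\bigl|\langle u(y)-u(x)-F(x)[y-x],y-x\rangle\bigr|+\bigl|\langle F(x)[y-x],y-x\rangle\bigr|
\]
combined with the radial-kernel computation (applied on sub-level sets $\{|F_\sym|\le M\}$) and Theorem~\ref{thm:1} yields $u\in BD(\Omega)$. A further triangle-inequality comparison between the integrands with $F$ and with $\mathrm{ap}\,\nabla u$, together with the first part of this corollary, then forces $F_\sym=\mathcal E u$ almost everywhere and $[E^s u](\Omega)=0$; since $\Qbf_1$ is a genuine norm on $M^{d\times d}_\sym$, this implies $E^s u=0$, i.e.\ $u\in LD^1(\Omega)$. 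The main technical obstacle I foresee is the control of $I_3$: unlike in $BV$, the $BD$-Lusin approximation does not come equipped with a scale-invariant Lipschitz bound on $\nabla w_\eta$, and the final equi-integrability argument must be arranged by intersecting $K_\eta$ with the sub-level sets of a maximal function of $|Eu|$, mirroring the truncation employed in~\cite{ponce}.
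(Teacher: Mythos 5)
Your decomposition $u(y)-u(x)-\mathrm{ap}\,\nabla u(x)[y-x]=I_1+I_2+I_3$ via a Lusin-type $C^1$-approximation $w_\eta$ is a genuinely different route from the paper's. The paper (following Ponce--Spector quite literally) mollifies, works with $u_\eta:=u\ast\psi_\eta$, and exploits the exact identity $Eu_\eta=(\mathcal E u)_\eta+(E^s u)_\eta$ together with Jensen's inequality for the energy $\mathcal R^\eps$ under convolution; the fundamental theorem of calculus for the smooth function $u_\eta$ then replaces your $I_2$/$I_3$ bookkeeping, and Reshetnyak continuity handles the passage to the limit. Your Lusin route, by contrast, is a plausible alternative, but you have correctly identified — and not closed — a genuine gap.

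The gap is exactly where you flag it, in the $I_3$-estimate (which, as you note, is the same quantity $\int_{\Omega\setminus K_\eta}\Qbf_1(Ew_\eta-\mathcal E u)$ that also controls the error in $[E(u-w_\eta)](\Omega)\to[E^s u](\Omega)$). To make this tend to zero you need $\|Ew_\eta\|_{L^1(\Omega\setminus K_\eta)}\to 0$, i.e.\ some uniform truncation of the gradients of the Lusin approximants as $\mathscr L^d(\Omega\setminus K_\eta)\to 0$. You propose to intersect $K_\eta$ with sub-level sets of a maximal function of $|Eu|$, ``mirroring the truncation in the $BV$ case.'' But this is precisely what \emph{cannot} be transplanted to $BD$: the pointwise Lipschitz bound $|u(y)-u(x)|\lesssim|y-x|\bigl(M|Du|(x)+M|Du|(y)\bigr)$ that underlies the $BV$ truncation has no analogue with $|Du|$ replaced by $|Eu|$ — this is a manifestation of Ornstein's non-inequality, which separates $BD$ from $BV$ at the level of such pointwise/maximal estimates. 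The qualitative Federer--Whitney Lusin theorem gives you $w_\eta\in C^1$ with $u=w_\eta$, $\mathrm{ap}\,\nabla u=\nabla w_\eta$ on $K_\eta$, but no uniform control of $\|\nabla w_\eta\|_\infty$, and the proposed maximal-function truncation does not repair this. (There are also smaller issues that your outline glosses over: for unbounded $\Omega$ you would need an exhaustion argument before invoking Theorem~\ref{thm:1} on $u-w_\eta$, and the $L^1$-characterization in the second paragraph needs more than a triangle-inequality comparison to conclude $E^s u=0$.) The mollification proof avoids all of these pitfalls because $E(u\ast\psi_\eta)=(Eu)\ast\psi_\eta$ splits exactly into absolutely continuous and singular convolutions, so no quantitative Lusin bound is ever needed.
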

\begin{remark}
	The assertions of the previous corollary remain unchanged if instead we consider the integrand
	\[
		\frac{\vert \dpr{u(y) - u(x) - \mathcal E u(x)[y-x],y-x}|}{|y-x|^2} \, \rho_\varepsilon(y-x).
	\]
\end{remark}

\section{Preliminaries}
In this section we briefly recall the properties of the $\Qbf$-norms defined in the introduction and we also review some well-known results about $BD(\Omega)$ and $W^{1,p}(\Omega;\R^d)$ spaces, where $\Omega$ is a Lipschitz (possibly unbounded) open set of $\R^d$. 

In all that follows, we write $|\frarg|$ to denote the classical Frobenius inner product norm on $M^{d \times d}$ (and $M^{d\times d}_\sym$), that is, 
\[
	|A|^2 \coloneqq \trace(A^T A) = \sum_{i,j=1}^d A_{ij}^2, \qquad A = (A_{ij}).
\] 

\subsection{Strict convergence}
We say that a sequence  $(u_k) \in BD(\Omega)$ converges \emph{strictly} to $u$ in $BD(\Omega)$ provided that
\[
	u_k \rightarrow u \; \text{in $L^1(\Omega;\R^d)$}, \quad Eu_k \toweakstar Eu \; \text{in $\Mcal(\Omega;M^{d \times d}_\sym)$},
\]
and 
\[
	|Eu_k|(\Omega) \to |Eu|(\Omega).
\]
To denote this, we write
\[
	u_k \stackrel{s}\longrightarrow u \quad \text{in $BD(\Omega)$} 
\]
\subsection{Extension operators}
When $p > 1$,  a direct consequence of Korn's inequality is the embedding $$LD^p(\Omega) \embed W^{1,p}(\Omega;\R^d).$$ This, in particular, allows one to make use of a plethora of extension operators $T\colon W^{1,p}(\Omega;\R^d) \to W^{1,p}(\R^d;\R^d)$ whenever $p > 1$. If $p = 1$, it is well known that neither $LD^1(\Omega)$ nor $BD(\Omega)$  embed into $W^{1,1}(\Omega;\R^d)$, not even locally. However, $BD(\Omega)$ does possess trace operators~\cite{baba} and in particular it possesses an extension operator $T : BD(\Omega) \to BD(\R^d)$ that does not charge the boundary, i.e., such that 
\[
	|E(Tu)|(\partial \Omega) = 0.
\]

\subsection{The $\Qbf$-norms on $LD^p$ and $BD$}\label{sec:Q}
As it has already been advanced in the previous section, we will work with certain Rayleigh-type norms on $M^{d\times d}_\sym$. For the convenience of the reader, let us recall its definition:
\begin{definition} Let $p \in [1,\infty)$. We define a norm on $M^{d \times d}_\sym$ by letting
	\begin{align*}
		\mathbf Q_p(A) & :=  \bigg(\fint_{\mathbb S^{d-1}} | \langle A\omega, \omega \rangle |^p \, dS(\omega)\bigg)^\frac{1}{p} \\[10pt]
		& \phantom{:}= \kappa_{p,d}\|\dpr{A\frarg,\frarg}\|_{L^p(\Sbb^{d-1})}, 
	\end{align*}
where $\kappa_{p,d} := |\Sbb^{d-1}|^{-1/p}$ and $|\Sbb^{d-1}|$ is the measure of the $(d-1)$-dimensional sphere in $\R^d$. 
\end{definition}
	That $\mathbf Q_p$ defines a norm for every $p \in [1,\infty)$ is an immediate consequence of the triangle inequality in $L^p$ and the spectral theorem for matrices in $M^{d \times d}_\sym$. Indeed, it is straightforward to verify that $\Qbf_p$ is invariant under the conjugation with orthogonal matrices and hence
	\begin{equation}\label{eq:eigen}
	\Qbf_p(A)^{p} = \fint_{\mathbb S^{d-1}} (\lambda_1\omega_1^2 + \dots \lambda_d\omega_d^2)^p \, dS(\omega),
	\end{equation}
	where $\lambda_1,\dots,\lambda_d$ are the eigenvalues of $A$. In particular, $\Qbf_p(A) = 0$ if and only if all the eigenvalues are zero, and by homogeneity and Cauchy--Schwarz' inequality it follows that
	\[
	C_{d,p} |A| \le \Qbf_p(A) \le |A|,
	\]
	for some constant $C_{d,p}$.
	In a natural manner, this defines an equivalent norm for functions  $F \in L^p(\Omega;M^{d \times d}_\sym)$ by setting
	\[
	[F]_p(\Omega) \coloneqq \bigg(\int_{\Omega} \Qbf_p(F)^p \; dx \bigg)^\frac{1}{p}\,.
	\]
	For $(M^{d \times d}_\sym)$-valued Radon measures $\mu \in \Mcal_b(\Omega;M^{d \times d}_\sym)$, we may consider the $\Qbf_1$-variation measure $[\mu] \in \Mcal^+(\Omega)$, which on Borel sets $U \subset \Omega$ is defined as the non-negative Radon measure taking the values
	\[
		[\mu](U) \coloneqq \int_U \Qbf_1\left(\frac{\mu}{|\mu|}(x)\right) \, d|\mu|(x),
	\]
	where $\mu/|\mu|$ is the Radon--Nikod\'ym derivative of $\mu$ with respect to $|\mu|$.
	Notice that $[\frarg]$ and $|\frarg|$ are equivalent norms  in $\Mcal_b(\Omega;M^{d \times d}_\sym)$. Hence, both 
	\begin{align*}
		|u|_{LD^p(\Omega)} &\coloneqq \|u\|_{L^p(\Omega)} + [Eu]_p(\Omega), \\
		|u|_{BD(\Omega)} &\coloneqq \|u\|_{L^1(\Omega)} + [Eu](\Omega),
	\end{align*}
	define equivalent norms on $LD^{p}(\Omega;\R^d)$ and $BD(\Omega)$ respectively.

\begin{remark}[Strict convexity and strict convergence]
Every $\Qbf_p$ is a convex $1$-homogenenous function (each of these being norms). However, it is worthwhile to mention that $\Qbf_1$ \emph{is not} a strictly convex norm. Indeed, it can be seen from~\eqref{eq:eigen} that $\Qbf_1(A) = \alpha_d \, \mathrm{tr} (A)$ for all positive definite matrices $A \in M^{d \times d}_\sym$,
which implies that this norm behaves linearly on this connected open set of matrices. In particular, $\Qbf_1$ \emph{is not} the norm associated to an inner product on $M^{d \times d}_\sym$, and the $[\frarg]$-strict convergence of measures
\[
\mu_\eps \toweakstar \mu \quad \text{in $\Mcal(\Omega;M^{d \times d}_\sym)$}, \qquad[\mu_\eps](\Omega) \to [\mu](\Omega),
\]
does not necessarily imply that $\mu_\eps \longrightarrow \mu$ in the classical $|\frarg|$-strict sense of measures; this last assertion follows from~\cite[Theorem~1.3]{VS}.
\end{remark}

\section{Proof of the main result} 

\subsection{Proof of the upper bound}
The proof of the upper bound inequality is somewhat standard as it follows closely the ideas from~\cite{BBM} and~\cite{D}, with the exception that we are considering slightly different integrands here. 

The first step will be to show a suitable $\eps$-independent upper bound for $u \in C^1(\R^d;\R^d)$ (see Lemma~\ref{lem:1} below). Once the scale-independent bound contained in Lemma~\ref{lem:1} has been established, the sought upper bound for $u \in LD^p(\Omega)$ and $u \in BD(\Omega)$ will follow from the existence of suitable extension operators for these spaces. 

For the next lemma, we write 
\begin{equation}\label{eq:measure_mu}
\mu_{p,\varepsilon}(x) := \left(\int_{\R^d}   \frac{\vert \langle u(y) - u(x),y-x \rangle |^{p}}{|y-x|^{2p}} \, \rho_\varepsilon(y-x) \, dy\right)^\frac{1}{p} . 
\end{equation}

\begin{lemma}\label{lem:1} Let $u \in C^1(\R^d;\R^d)$ and let $U \subset \R^d$ be a Borel set. For a positive radius $R > 0$, we shall write 
	\[
	U_R \coloneqq U + B_R,
	\]
	to denote the set whose complement is at distance $R$ from $U$. Then, it holds 
	\begin{equation}\label{eq:upper_bound}
	\int_U\mu_{p,\varepsilon}^p \le \left([Eu]_p(U_R)\right)^p + \frac{2}{R^p} \Vert u \Vert_{L^p(U)}^p \int_{\R^d \setminus B_R(0)} \rho_\varepsilon(x)\, dx. 
	\end{equation}
\end{lemma}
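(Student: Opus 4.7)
The plan is to split the $y$-integration defining $\mu_{p,\varepsilon}(x)^p$ into the local region $\{y : |y-x| < R\}$ and the tail $\{y : |y-x| \geq R\}$, and to bound each piece separately. The starting point for the local piece is the fundamental theorem of calculus identity
\[
\langle u(y)-u(x),\,y-x\rangle \;=\; \int_0^1\langle \nabla u(x+t(y-x))(y-x),\,y-x\rangle\,dt,
\]
valid for $u\in C^1(\R^d;\R^d)$, combined with the algebraic observation that $\langle W\xi,\xi\rangle=0$ for every skew-symmetric $W$. These two ingredients together let me replace $\nabla u$ by $Eu$ inside the integral, which is precisely what enables the $\Qbf_p(Eu)$-control in the local part.

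For the local contribution I would apply Jensen's inequality against the probability measure $dt$ on $[0,1]$ to obtain
\[
\frac{|\langle u(y)-u(x),y-x\rangle|^p}{|y-x|^{2p}}\;\leq\;\int_0^1 \bigl|\langle Eu(x+t(y-x))\omega,\omega\rangle\bigr|^p\,dt,\qquad \omega:=\frac{y-x}{|y-x|}.
\]
After integrating in $x\in U$ and in $y$ with $|y-x|<R$, I would switch to polar coordinates $y-x=r\omega$, apply Fubini, and use the geometric fact that $x+tr\omega\in U+B_R=U_R$ for every $t\in[0,1]$ and $|r|<R$ to translate the $x$-variable and enlarge its domain to $U_R$. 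The spherical integration then produces exactly $|\mathbb S^{d-1}|\,\Qbf_p(Eu(x'))^p$ by definition of $\Qbf_p$, while the leftover radial factor equals $\int_{B_R}\rho_\varepsilon\leq 1$. Putting this together, the local piece is bounded by $[Eu]_p(U_R)^p$, which is the first term in the claim.

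For the tail I would use the crude pointwise estimates $|\langle u(y)-u(x),y-x\rangle|\leq|u(y)-u(x)|\,|y-x|$ and $|y-x|\geq R$ to reduce the integrand to $R^{-p}|u(y)-u(x)|^p\rho_\varepsilon(y-x)$. A standard split $|u(y)-u(x)|^p\le C_p(|u(y)|^p+|u(x)|^p)$, followed by Fubini and the change of variable $z=y-x$, separates the factor $\int_{\R^d\setminus B_R}\rho_\varepsilon$; the radiality of $\rho_\varepsilon$ then allows me to symmetrize the two $L^p$-terms so that they combine into a single $\|u\|_{L^p}^p$-multiple times the tail mass of $\rho_\varepsilon$. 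The main technical point I foresee is pure bookkeeping: ensuring the spherical normalization in the local step yields exactly $\Qbf_p^p$ rather than an unwanted dimensional multiple, and carefully tracking which $L^p$-norm (over $U$, over $U_R$, or over $\R^d$) appears in the tail after exchanging the order of integration, so that the constants in the statement are correctly recovered.
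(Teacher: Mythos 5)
Your proposal is correct and follows essentially the same route as the paper: split the $y$-integral into the ball $B_R(x)$ and its complement, use the fundamental theorem of calculus plus the skew-symmetric cancellation $\langle W\xi,\xi\rangle=0$ to replace $\nabla u$ by $Eu$, apply Jensen on $[0,1]$, then change variables/Fubini/polar coordinates to produce the $\Qbf_p(Eu)$-term over the enlarged domain $U_R$; the tail is controlled crudely via Cauchy--Schwarz and the mass of $\rho_\eps$ outside $B_R$. The only difference is cosmetic --- you symmetrize $\nabla u$ to $Eu$ before applying Jensen rather than after, as the paper does --- and you correctly anticipate the bookkeeping points (spherical normalization, domain of the $L^p$-norm in the tail) that need care.
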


\begin{proof}[Proof of the lemma]For a fixed $R>0$, we split 
	\begin{equation*}
	\int_U\mu_{p,\eps}^p = \int_U \int_{\R^d}   \frac{\vert \langle u(y) - u(x),y-x \rangle |^{p}}{|y-x|^{2p}} \, \rho_\varepsilon(y-x) \, dy \, dx = \  I_1 + I_2,  
	\end{equation*}
	where 
	\begin{equation*}
	I_1 := \int_U  \int_{B_R(x)} \frac{|\langle u(y) - u(x),y-x \rangle |^p}{|y - x|^{2p}} \, \rho_\varepsilon(y-x) \, dy \, dx
	\end{equation*}
	\begin{equation*}
	I_2 := \int_U  \int_{\R^d \setminus B_R(x)} \frac{|\langle u(y) - u(x),y-x \rangle |^p}{|y - x|^{2p}} \, \rho_\varepsilon(y-x) \, dy \, dx. 
	\end{equation*}
	Clearly, the second term in the right hand side of \eqref{eq:upper_bound} is an upper bound for $I_2$. We shall hence focus on showing that $\left([Eu]_p(U_R)\right)^p$ is an upper bound for $I_1$. To this end, let us recall  the path integral identity
	\begin{equation*}
	u(y) -u(x) = \int_0^1 \nabla u (ty + (1-t)x) \cdot (y-x) \, dt, \qquad y,x \in \R^d.
	\end{equation*}
	Together with Jensen's inequality, this yields that
	\begin{equation*}
	\int_U\int_{B_R(x)} \int_0^1 \left \vert \left \langle \nabla u (ty + (1-t)x) \cdot  \frac{y-x}{|y-x|},   \frac{y-x}{|y-x|} \right \rangle \right \vert^p dt\, \rho_\varepsilon(y-x) \, dy \, dx
	\end{equation*}
	is an upper bound for $I_1$.
	
	Fixing $x$, we apply the change of variables $h := y-x$ and apply Tonelli's Theorem to permute the integrals and obtain
	\begin{equation*}
	\begin{split}
	I_1 & \le \int_{B_R} \int_0^1 \int_U \left \vert  \left \langle \nabla u (x + th) \cdot \frac{h}{|h|},   \frac{h}{|h|} \right \rangle \right \vert^p  dx\, dt\, \rho_\varepsilon(h) \, dh \\ 
	& = \int_{B_R} \int_0^1 \int_{U+th} \left \vert  \left \langle \nabla u (z) \cdot \frac{h}{|h|},   \frac{h}{|h|} \right \rangle \right \vert^p  dz\, dt\, \rho_\varepsilon(h) \, dh \\
	& \le \int_{B_R} \int_{U_R} \left \vert  \left \langle \nabla u (z) \cdot \frac{h}{|h|},   \frac{h}{|h|} \right \rangle \right \vert^p  dz\, \rho_\varepsilon(h) \, dh.
\end{split}
\end{equation*}
Observe that if $A \in M^{d \times d}$, then $\dpr{A\omega,\omega} = \frac 12\dpr{(A^T + A)\omega,\omega}$ for all $\omega \in \R^d$. We shall use this to express the integrand on the right-hand side of the estimate in terms of $Eu(x)$ rather than $D u(x)$. Therefore, from the change of variables $\omega = \frac{h}{|h|}$, the coarea formula on balls and the radial symmetry of the mollifier we deduce that
\begin{align*}
	I_1 & \le |\partial B_r|\int_{0}^R  \widehat{\rho_\varepsilon}(r) \, r^{d-1}dr \times \bigg( \int_{U_R} \int_{\mathbb S^{d-1}} \left \vert  \left \langle Eu (z)  \omega,   \omega \right \rangle \right \vert^p  d\H^{d-1}(\omega)\, dz \bigg)  \\
	& \le \|\rho\|_{L^1} \int_{U_R} \Qbf_p(Eu)^p \, dz  \le \left([Eu]_p(U_R)\right)^p.
\end{align*}
This completes the proof of the lemma.
\end{proof}

	\begin{proof}[Proof of the upper bound] Let $U \subset \Omega$ be an open set. Let $u \in LD^p(\Omega)$ or $u \in BD(\Omega)$. We aim to show that
		\[
			 \limsup_{\varepsilon \to 0} \mathscr F_{p,\eps}(u,\Omega) \le [Eu]_p(\Omega)^p \quad \text{when $p > 1$}
		\] 
		or 
		\[
		\limsup_{\varepsilon \to 0} \mathscr F_{1,\eps}(u,\Omega) \le [Eu](\Omega),
		\] 
		respectively.
		Let us recall from the preliminaries that for $p>1$ there exists an extension operator $T: W^{1,p}(\Omega;\R^d) \to W^{1,p}(\R^d;\R^d)$, and, for $p = 1$, there also exists an extension operator,  which for the sake of simplicity we shall also denote by $T\colon BD(\Omega) \to BD(\R^d)$, that does not charge the boundary, that is, $|E(Tu)|(\partial \Omega) = 0$. On either case, a standard mollification argument yields an approximating sequence $(u_k) \subset C^\infty(\R^d;\R^d)$ satisfying 
\[
	u_k \longrightarrow Tu \; \text{in $W^{1,p}(\R^d)$ \qquad when $p > 1$},
\]
or
\[
	u_k \stackrel{s}\longrightarrow Tu \quad \text{in $BD(\R^d)$ \qquad when $p = 1$.}
\]
In the latter case, there exists a full $L^1$-measure set $I \subset (0,\infty)$ for which it holds $|Eu|(\partial U_R) = 0$ for all $R \in I$. In particular, from the strict convergence above we get
\begin{align*}
		u_k \stackrel{s}\longrightarrow Tu \quad \text{in $BD(U_R)$ \qquad \text{for all $R \in I$.}}
\end{align*}

\emph{Claim.} Let $R \in I$ and let $\eps > 0$. Then
	\begin{equation}\label{eq:transition}
		\lim_{k \to \infty}  \mathscr F_{p,\eps}(u_k,U_R)= \mathscr F_{p,\eps}(Tu,U_R)
	\end{equation}
and
\begin{equation}\label{eq:transition2}
	\lim_{k \to \infty} [Eu_k]_p(U_R) = M(u,U_R) \coloneqq
	\begin{cases}
		[E(Tu)]_p(U_R) & \text{if $u \in LD^p(U)$},\\
		[E(Tu)](U_R)& \text{if $u \in BD(U)$}.
	\end{cases}
\end{equation}
	The first limit follows directly from Tonelli's theorem and the strong convergence $u_k \to u$ in $L^p(U)$. 
	Let us  address the convergence for the second limit.
	For $p = 1$, the argument follows directly from the strict convergence $u \stackrel{s}\longrightarrow Tu$ in $BD(U_R)$, Reshetnyak's continuity theorem (\cite[Thm.~2.39]{AFP}) and the fact that $\Qbf_1$ is $1$-homogeneous:
	\begin{align*}
		\lim_{k \to \infty} [Eu_k]_1(U_R) & = 	\lim_{k \to \infty} \int_{U_R} \Qbf_1\left(\frac{Eu_k}{|Eu_k|}(x)\right) \, d|Eu_k| \mathscr L^n(x)\\
		&  = \int_{U_R} \Qbf_1\left(\frac{E(Tu)}{|E(Tu)|}(x)\right) \, d|E (Tu)|(x) = [E(Tu)](U_R).
	\end{align*} 
For $p > 1$, we recall that $\Qbf_p$ convex, so that $[\frarg]_p$ is lower semicontinuous with respect to weak convergence in $L^p$. This implies the lower bound 
	\[
		[E(Tu)]_p(U) \le \liminf_{k \to \infty} [Eu_k]_p(U).
	\]
	The upper bound follows directly from the strong convergence $Eu_k \to E(Tu)$ in $L^p(\Omega)$ and the triangle inequality for $\Qbf_p$, namely
	\begin{align*}
		\Qbf_p(Eu_k)  \le \Qbf_p(E(Tu)) +  \|Eu_k - E(Tu)\|_{L^p(U)} \to \Qbf_p(E(Tu))\quad \text{in $L^p(U)$}.
	\end{align*}
This proves the claim.
	
\emph{Conclusion.} Let $R \in I$. Using the estimates from Step 1 on $u_k$ we get (recall that $Tu|_U = u$), 
\begin{align*}
	\mathscr F_{p,\eps}(u,U) & = \lim_{k \to \infty} \mathscr F_{\eps,p}(u_k,U) \\
	&\le \lim_{k \to \infty} [Eu_k]_p(U_R)^p + \frac{2}{R^p} \|u_k\|_{L^p(U)}^p\|\rho_\eps\|_{L^1(\R^d \setminus B_R)} \\
	& = M(Tu,U_R) + \frac{2}{R^p} \|u\|_{L^p(U)}^p\|\rho_\eps\|_{L^1(\R^d \setminus B_R)}.
\end{align*}
Letting $\eps \to 0^+$ on both sides of the inequality and recalling~\eqref{eq:eps} yields the estimate
\[
	\limsup_{\eps\to 0^+} \mathscr F_{p,\eps} (u,U) \le M(Tu,U_R).
\]
Now we use that $\Qbf_p(A) \le |A|$, to deduce
\begin{align}\label{eq:cerrado}
\limsup_{\eps\to 0^+} \mathscr F_{p,\eps} (u,U) & \le  M(u,U) + \limsup_{\substack{R \in I,R \to 0^+}} \, M(E(Tu),U_R \setminus U)\nonumber \\
& \le M(u,U) + M(E(Tu),\partial U).
\end{align}
Since $M(E(Tu),\partial \Omega) = 0$, choosing $U = \Omega$ in the estimate above yields the sought upper bound inequality.
\end{proof}

As a immediate corollary we establish pre-compactness (either in $L^p$ or $\Mcal$) for the family $\{\mu_{p,\eps}\}_{\varepsilon>0}$. Moreover, we note  that each of its limit points lies below $|Eu|$. 

\begin{corollary}\label{cor:conv_measures} Let $1 \le p < \infty$ and assume that
	\[
		u \in \begin{cases}
			BD(\Omega) & \text{if $p = 1$},\\
			LD^p(\Omega) & \text{if $1 \le p <\infty$}.
		\end{cases}
	\] 
Then, the family $$\mathscr U \coloneqq \{\mu^p_{p,\eps}\, \mathscr L^d\}_\eps, \qquad \eps \in (0,1)$$
	 is sequentially pre-compact in $\Mcal^+(\Omega)$ with respect to the weak* convergence of measures. Moreover, for every limit 
\[
\mu_{p,\eps_i}^p \toweakstar \mu \quad \text{in $\Mcal^+(\Omega)$}, \qquad \eps_i \to 0^+,
\] 
 there exists a Borel function $g : \Omega \to [0,1]$ satisfying
 \[
 	\mu = g \, \Qbf_p(Eu)^p.
 \]
\end{corollary}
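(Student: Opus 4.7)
The plan is to read the pre\-compactness off the global upper bound already isolated in the proof of the previous section, and then to upgrade it, via a Radon--Nikod\'ym decomposition, to the stated structure for every weak-$*$ cluster point. Throughout I write $M(u, \cdot)$ for the reference measure $\Qbf_p(Eu)^p \mathscr L^d$ when $u \in LD^p(\Omega)$ with $p \ge 1$, and $[Eu]$ when $u \in BD(\Omega)$ with $p = 1$; in either case $M(u, \Omega) < \infty$ by hypothesis.

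First I would take $U = \Omega$ in inequality \eqref{eq:cerrado} and exploit that the extension operator $T$ does not charge $\partial \Omega$ to conclude the uniform mass bound
\[
\sup_{\eps \in (0,1)} \mu_{p,\eps}^p(\Omega) \le M(u, \Omega) < \infty.
\]
Combined with the separability of $C_0(\Omega)$, this places the family $\mathscr U$ inside a closed ball of the dual space $\Mcal(\Omega) = C_0(\Omega)^{*}$, so the sequential Banach--Alaoglu theorem yields weak-$*$ pre-compactness of $\mathscr U$ in $\Mcal^+(\Omega)$.

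Next, fix a weak-$*$ cluster point $\mu_{p,\eps_i}^p \mathscr L^d \toweakstar \mu$ and apply \eqref{eq:cerrado} now to an arbitrary open set $U \subset \Omega$ to obtain
\[
\limsup_{i \to \infty} \mu_{p,\eps_i}^p(U) \le M(u, U) + [E(Tu)](\partial U).
\]
Since $\mu$ and $|E(Tu)|$ are both finite Radon measures, the open sets $U \subset \Omega$ with $\mu(\partial U) = [E(Tu)](\partial U) = 0$ form a base for the topology of $\Omega$ — one may simply take concentric balls whose radii avoid a countable exceptional set. On any such \emph{good} set the portmanteau lemma gives $\mu(U) \le M(u, U)$, and outer regularity of Radon measures extends this to arbitrary Borel subsets of $\Omega$, so $\mu \le M(u, \cdot)$ as measures.

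The conclusion then follows from absolute continuity: Radon--Nikod\'ym produces a Borel density $g \colon \Omega \to [0, \infty)$ with $\mu = g \, M(u, \cdot)$, and the domination $\mu \le M(u, \cdot)$ forces $g \le 1$ at $M$-a.e.\ point, yielding the required representative $g \colon \Omega \to [0,1]$ after modification on an $M$-null set. The main obstacle is in the second step when $p = 1$: because $[Eu]$ may carry singular mass on $\partial U$, the choice of radii must be coordinated both with the full-measure set $I \subset (0, \infty)$ on which \eqref{eq:cerrado} is directly applicable (so that the boundary term $[E(Tu)](\partial U_R)$ can be taken arbitrarily small) and with the null-boundary condition for the cluster measure $\mu$; for $p > 1$ this obstacle vanishes because $M(u, \cdot)$ is absolutely continuous with respect to $\mathscr L^d$.
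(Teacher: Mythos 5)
Your argument is correct and follows the same two-step route as the paper: equi-boundedness from the upper bound yields weak-$*$ pre-compactness, and every cluster point is shown to be dominated by $\Qbf_p(Eu)^p\,\mathscr L^d$ (resp.\ $[Eu]$ when $p=1$), after which Radon--Nikod\'ym together with the domination gives a density $g\in[0,1]$; you are simply unpacking, via null-boundary sets and outer regularity, the step the paper compresses into ``a standard measure theoretic argument''. One small imprecision: the displayed bound $\sup_{\eps\in(0,1)}\mu_{p,\eps}^p(\Omega)\le M(u,\Omega)$ does not follow from~\eqref{eq:cerrado}, which is only a $\limsup$ estimate; what the proof of Lemma~\ref{lem:1} actually provides, for a fixed $R>0$ and every $\eps$, is a bound of the form $M(Tu,\Omega_R)^p+\tfrac{2}{R^p}\|u\|_{L^p(\Omega)}^p$, which is finite and is all that the Banach--Alaoglu step needs.
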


\begin{proof} We give the argument for $p=1$ as the one for $p > 1$ is analogous. The equi-boundedness follows directly from the upper bound and the fact that the measures are positive. Now, let $\mu$ be a limit point as above and let $U \subset \Omega$ be an Borel set. Then, in light of~\eqref{eq:cerrado} we get
	\[
		\mu(U) \le \lim_{\varepsilon_i \to 0^+} \int_\Omega \mu_{p,\eps}^p(U) = \limsup_{\eps \to 0^+} \mathscr F_{p,\eps}(u,U) \le \int_{\bar U \cap \Omega}\Qbf_1(Eu)
	\]
	for all Borel sets $U \subset \Omega$. A standard measure theoretic argument implies that $\mu \ll \Qbf_p(Eu)$. Therefore, by the Radon--Nikod\'ym theorem there exists $g \in L^1(\Omega,\Qbf_1(Eu);\R^+)$ such that 
\[
	\mu = g \,\Qbf_1(Eu), \qquad g \le 1.
\]
This finishes the proof.
\end{proof}

\subsection{Proof of the lower bound} We show that if $p \in (1,\infty)$ and $u \in L^p(\Omega;\R^d)$, then (under the conventions discussed in the introduction)
\[
	[Eu]_p(\Omega)^p \le \liminf_{\eps \to 0^+} \mathscr F_{p,\eps}(u,\Omega) \in [0,\infty],
\]
and  
\[
	[Eu](\Omega) \le \liminf_{\eps \to 0^+} \mathscr F_{1,\eps}(u,\Omega)  \in [0,\infty] \quad \text{for all $u \in L^1(\Omega;\R^d)$.}
\]

For this step, we give a proof by means of a simple mollification argument. This, in turn, differs from the proof by duality originally given in~\cite{BBM} for $W^{1,p}$-gradients. Our proof follows from the observation that the energy is convex with respect to translations, and hence mollification. In particular, our argument  also presents an alternative proof to the one contained in~\cite{D} for gradients, which dispenses with the need of performing certain technical measure theoretic density arguments.

\subsubsection{The regular case}
We begin with a lemma, which establishes a local version of the lower bound for regular functions. 

\begin{lemma}\label{lemma:smooth_lower_bound} 
	Let $\Omega \subset \R^d$ be an open set and let $u \in C^2(\Omega;\R^d)$.
	Then, for every compactly contained connected open set $A \Subset \Omega$ it holds
	\begin{equation*}
		[Eu]_p(A)^p \le \liminf_{\varepsilon \to 0^+} \mathscr F_{p,\varepsilon}(u,A).  
	\end{equation*}
\end{lemma}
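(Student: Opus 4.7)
The plan is to establish the pointwise limit
\[
\mu_{p,\varepsilon}^p(x) \longrightarrow \Qbf_p(Eu(x))^p \quad \text{as $\varepsilon \to 0^+$, for every $x \in A$,}
\]
where $\mu_{p,\varepsilon}^p(x) := \int_A |\langle u(y)-u(x), y-x\rangle|^p |y-x|^{-2p} \rho_\varepsilon(y-x)\,dy$ is the inner integrand of $\mathscr F_{p,\varepsilon}(u,A)$. Once the pointwise limit is in hand, Fatou's lemma immediately gives
\[
[Eu]_p(A)^p = \int_A \Qbf_p(Eu(x))^p\,dx \le \liminf_{\varepsilon\to 0^+}\int_A \mu_{p,\varepsilon}^p(x)\,dx = \liminf_{\varepsilon\to 0^+} \mathscr F_{p,\varepsilon}(u,A).
\]

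To produce the pointwise limit, I would Taylor-expand $u \in C^2(\Omega)$ around a fixed $x \in A$. Since $\bar A$ is compactly contained in $\Omega$, there is a constant $C$ (uniform in $x,y$ ranging over a compact neighborhood of $\bar A$ in $\Omega$) such that
\[
u(y) - u(x) = \nabla u(x)(y-x) + R(x,y), \qquad |R(x,y)| \le C|y-x|^2.
\]
Exploiting that the skew part of $\nabla u$ is annihilated by the quadratic form $A \mapsto \langle A\omega, \omega\rangle$, this yields
\[
\frac{\langle u(y) - u(x), y-x\rangle}{|y-x|^2} = \langle Eu(x)\omega, \omega\rangle + O(|y-x|), \qquad \omega := \frac{y-x}{|y-x|},
\]
with error uniform in $\omega$.

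Fix $x \in A$ and $\eta > 0$. Choose $\delta > 0$ so small that $B_\delta(x) \subset A$ and the Taylor error above is bounded by $\eta$ on $B_\delta(x)$; split $\mu_{p,\varepsilon}^p(x) = I_\delta(x,\varepsilon) + J_\delta(x,\varepsilon)$ according to whether $y \in B_\delta(x)$ or $y \in A\setminus B_\delta(x)$. The far piece $J_\delta$ is bounded by a constant times $\delta^{-p}\|\rho_\varepsilon\|_{L^1(\R^d \setminus B_\delta)}$ (using $\|u\|_{L^\infty(A)} < \infty$) and hence vanishes as $\varepsilon \to 0^+$ by~\eqref{eq:eps}. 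For the near piece, the elementary inequality $\bigl||a+b|^p - |a|^p\bigr| \le p(|a|+|b|)^{p-1}|b|$ combined with the Taylor expansion gives
\[
I_\delta(x,\varepsilon) = \int_{B_\delta(x)} |\langle Eu(x)\omega,\omega\rangle|^p \rho_\varepsilon(y-x)\,dy + O(\eta),
\]
and a spherical change of variables $y-x = r\omega$, together with $\int_0^\delta \hat{\rho}_\varepsilon(r)\, r^{d-1}\,dr \to |\Sbb^{d-1}|^{-1}$ (a consequence of $\|\rho_\varepsilon\|_{L^1} = 1$ and~\eqref{eq:eps}), sends the leading term to $\Qbf_p(Eu(x))^p$. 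Letting $\eta \to 0^+$ (through $\delta \to 0^+$) completes the pointwise convergence.

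The main delicate point is the uniform control of the Taylor remainder in $\omega$ and in $x$, which is exactly what the $C^2$-regularity and the compact containment $A \Subset \Omega$ buy; everything else is a routine near/far splitting exploiting the concentration of $\rho_\varepsilon$ at the origin. Observe that the argument actually produces the full limit rather than a $\liminf$, so Fatou is used only to commute limits through the outer integral in the final step.
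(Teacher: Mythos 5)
Your argument is correct and takes a genuinely different route from the paper's. Both proofs start from the same Taylor expansion and exploit the radial normalization and concentration of $\rho_\varepsilon$, but they are organized differently. The paper derives a \emph{global} pointwise inequality $\frac{|\langle Eu(x)(y-x),y-x\rangle|^p}{|y-x|^{2p}} \le (1+\zeta)\frac{|\langle u(y)-u(x),y-x\rangle|^p}{|y-x|^{2p}} + c_{p,\zeta}C|y-x|^p$ (using the Taylor remainder plus a Young inequality that introduces the $\zeta$-parameter), then integrates this over $A\times A$, restricts the left-hand side to the smaller domain $A_{-\delta}\times B_\delta(x)$, identifies the limit of that restricted integral as $[Eu]_p(A_{-\delta})^p$, shows the error term vanishes, and finally sends $\zeta\to 0$ and $\delta \to 0$. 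You instead fix $x$ and establish the full \emph{pointwise} limit $\mu_{p,\varepsilon}^p(x)\to\Qbf_p(Eu(x))^p$ via a near/far splitting at a fixed scale $\delta$, the elementary Lipschitz estimate $\bigl||a+b|^p-|a|^p\bigr|\le p(|a|+|b|)^{p-1}|b|$ in place of the $\zeta$-Young step, and the Cauchy--Schwarz bound $\delta^{-p}\|u\|_{L^\infty(A)}^p\|\rho_\varepsilon\|_{L^1(\R^d\setminus B_\delta)}$ for the far piece; Fatou's lemma then finishes. Your route is somewhat cleaner conceptually (it reveals that the inner integrals actually converge pointwise, avoids the auxiliary $(1+\zeta)$-factor, and does not need the inner-approximation sets $A_{-\delta}$ in the outer variable), at the modest cost of requiring an $\varepsilon$-realizing subsequence before invoking Fatou; the paper's global double-integral formulation keeps the entire argument on the level of the functionals $\mathscr F_{p,\varepsilon}$ themselves, which matches how the upper-bound step is phrased, but neither approach buys any extra generality here.
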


\begin{proof}
	Fix $A$ as in the statement and let $x \in A$. Observe that, since $u$ is of class $C^2(\overline{A})$ and $\bar A$ is connected and compact, we may appeal to its Taylor's expansion there. We can thus write, for every $y \in A$
	\begin{equation*}
	u(y) = u(x) + \nabla u(x) \cdot (y-x) + r(\vert x-y \vert ^2),
	\end{equation*}
where $|r(s)| \le Cs$ for every $s \in [0,(\diam A)^2]$, for some constant $C>0$ depending only on $\|u\|_{C^2(\bar A)}$ and $A$.
From this we get 
\begin{equation*}
	\begin{split}
		\frac{\langle u(y) - u(x), y-x \rangle}{|y-x|^2}  & = \frac{\langle \nabla u(x) \cdot (y-x), y-x\rangle }{|y-x|^2} +  \frac{\langle r(|x-y|^2), y-x\rangle}{|y-x|^2} \\ & 
		= \frac{\langle Eu(x) \cdot (y-x), y-x\rangle }{|y-x|^2} +  \frac{\langle r(|x-y|^2), y-x\rangle}{|y-x|^2}. 
	\end{split}
\end{equation*}
Rearranging and taking the absolute values we obtain 
\begin{equation*}
	\begin{split}
		\frac{\vert  \langle E u(x) \cdot (y-x), y-x\rangle \vert}{|x-y|^2}  & \le \frac{\vert \langle u(y) - u(x), y-x \rangle \vert }{|x-y|^2} + \frac{\vert  \langle r(|y-x|^2), y-x\rangle \vert}{|y-x|^2}\\ 
		& \le  \frac{\vert \langle u(y) - u(x), y-x \rangle \vert }{|x-y|^2}  + C| y-x |. \\ 
	\end{split}
\end{equation*}
Recall that, for every $p \ge 1$, for every $a,b \ge 0$ and for every $\zeta>0$, by Young's inequality we have the estimate
\begin{equation*}
	(a+b)^p \le (1+\zeta)a^p + c_{p,\zeta}b^p, 
\end{equation*}
for a suitable (large) constant $c_{p,\zeta}>0$. 
Therefore, we have
\begin{equation*}
	\begin{split}
		&\frac{\vert  \langle E u(x) \cdot(y-x), y-x\rangle \vert^p}{|y-x|^{2p}} \\ & \qquad\le \left(  \frac{\vert \langle u(y) - u(x), y-x \rangle \vert }{|x-y|^2}  + C| x-y | \right)^p \\ 
		& \qquad \le   (1+\zeta)\frac{\vert \langle u(y) - u(x), y-x \rangle \vert^p }{|x-y|^{2p}} + c_{p,\zeta}C| x-y |^p.
	\end{split}
\end{equation*}
Now we multiply times  $\rho_\e(y-x)$ and we integrate w.r.t. $x,y$ on $A$. We obtain 
\begin{equation}\label{eq:collective_double_integral}
	\begin{split}
		&\int_A\int_{A} \frac{\vert  \langle E u(x) \cdot(y-x), y-x\rangle \vert^p}{|y-x|^{2p}}  \rho_\e(y-x)  \, dy\, dx \\ 
		& \qquad \le   (1+\zeta)\int_A \int_{A} \frac{\vert \langle u(y) - u(x), y-x \rangle \vert^p }{|x-y|^{2p}}  \rho_\e(y-x)  \, dy \, dx \\ 
		& \qquad \, + c_{p,\zeta}C \int_A \int_{A}| x-y |^p \rho_\e(y-x) \, dy \, dx. 
	\end{split}
\end{equation}
We observe that the left-hand side of \eqref{eq:collective_double_integral}, for $\delta >0$ sufficiently small, can be estimated from below by
\begin{equation}\label{eq:bound_below_lhs}
	\begin{split}
		&\int_A\int_{A} \frac{\vert  \langle E u(x) \cdot(y-x), y-x\rangle \vert^p}{|y-x|^{2p}}  \rho_\e(y-x)  \, dy\, dx \\ 
		& \ge \int_A\int_{A \cap B_\delta(x)} \frac{\vert  \langle E u(x) \cdot(y-x), y-x\rangle \vert^p}{|y-x|^{2p}}  \rho_\e(y-x)  \, dy \, dx  \\ 
		& \ge  \int_{A_{-\delta} } \int_{B_\delta(x)}\frac{\vert  \langle E u(x) \cdot(y-x), y-x\rangle \vert^p}{|y-x|^{2p}}  \rho_\e(y-x)  \, dy \, dx
	\end{split}
\end{equation}
where $A_{-\delta}:=\{x \in A: B_\delta(x) \subset A\}$. 
Combining \eqref{eq:collective_double_integral} and \eqref{eq:bound_below_lhs} we have 
\begin{equation}\label{eq:boh}
	\begin{split}
		&\int_{A_{-\delta} } \int_{B_\delta(x)}\frac{\vert  \langle E u(x) \cdot(y-x), y-x\rangle \vert^p}{|y-x|^{2p}}  \rho_\e(y-x)  \, dy \, dx  \\ 
		& \qquad \le   (1+\zeta)\int_A \int_{A} \frac{\vert \langle u(y) - u(x), y-x \rangle \vert^p }{|x-y|^{2p}}  \rho_\e(y-x)  \, dy \, dx \\ 
		& \qquad \, + c_{p,\zeta}C \int_A \int_{A}| x-y |^p \rho_\e(y-x) \, dy\,dx. 
	\end{split}
\end{equation}
We now send $\e \to 0^+$ in \eqref{eq:boh} and we study the terms separately.  

\emph{Claim 1.} We have
\begin{equation*}
	\lim_{\varepsilon \to 0^+}  \int_{ A_{-\delta}} \int_{B_\delta(x)} \frac{\vert  \langle E u(x) \cdot(y-x), y-x\rangle \vert^p}{|y-x|^{2p}}  \rho_\e(y-x)  \, dy\, dx = [Eu]_p(A_{-\delta})^p. 
\end{equation*}
Indeed, by the change of variables $h:=y-x$ (in the $y$-variable) we can re-write 
\begin{equation*}
	\begin{split}
		 \int_{ A_{-\delta}}\int_{B_\delta(x)} &  \frac{\vert \langle E u(x) \cdot (y-x), y-x\rangle \vert^p }{|y-x|^{2p}} \rho_\e(y-x)\, dy \, dx \\ 
		= &  \int_{A_{-\delta}} \int_{B_\delta(0)}  \frac{\vert \langle E u(x) \cdot h, h\rangle \vert^p }{|h|^{2p}} \rho_\e(h)\, dh \, dx. 
	\end{split}
\end{equation*}
Therefore, by the coarea formula on balls and the radial symmetry of the mollifier, we further obtain 
\begin{equation*}
	\begin{split}
		\int_{A_{-\delta}} \int_{B_\delta}  & \frac{\vert \langle E u(x) \cdot h, h\rangle \vert^p }{|h|^{2p}} \rho_\e(h)\, dh \, dx \\  
		& = \int_{A_{-\delta}} \fint_{\mathbb S^{d-1}} \vert \langle E u(x) \cdot \omega, \omega \rangle \vert^p\,  dS(\omega) \, dx  \int_0^\delta |\partial B_1|\widehat{\rho_\varepsilon}(r)r^{d-1} \, dr \\ 
		& = \Vert \rho_\varepsilon\Vert_{L^1(B_\delta)} \int_{A_{-\delta}} \mathbf Q_p(Eu(x))^p \, dx \, \stackrel{\eqref{eq:eps1}}\longrightarrow\, [Eu]_p(A_{-\delta})^p
	\end{split}
\end{equation*} 
as $\varepsilon \to 0^+$ and this concludes the proof of the Claim. 

\emph{Claim 2.} We have  
\begin{equation*}
	\lim_{\varepsilon \to 0^+} \int_{A} \int_{A}  |x-y |^p  \rho_\e(y-x) \, dy \, dx = 0. 
\end{equation*}
Changing variables as in the previous claim, we get 
\begin{equation*}
	\int_{A} \int_{A}  |x-y |^p  \rho_\e(y-x) \, dy \, dx \le \mathscr L^d(A) \int_{B_R} |h |^p  \rho_\e(h) \, dh,
\end{equation*}
where $R  \coloneqq \diam A$.
Now, for any fixed $\sigma >0$, we can estimate the integral on the right-hand side by
\begin{equation*}
	\begin{split}
		\int_{\R^d} |h|^p \rho_\varepsilon(h) \, dh & \le \int_{B_{\sigma}} |h|^p \rho_\varepsilon(h) \, dh +   R^p\int_{B_R\setminus  B_{\sigma}}  \rho_\varepsilon(h) \, dh. 
	\end{split}
\end{equation*}
The second term on the right-hand side vanishes as $\e \to 0^+$ due to \eqref{eq:eps}. Recalling the normalization condition \eqref{eq:eps1}, the first term on the right-hand side can be roughly estimated by
\begin{equation*}
	\limsup_{\varepsilon \to 0^+} \int_{B_\sigma} |h|^p \rho_\varepsilon(h) \, dh \le \sigma^p
\end{equation*}
and, since $\sigma>0$ is an arbitrary positive real number, we conclude the assertion of Claim 2. 

Combining Claim 1 and Claim 2, we have from \eqref{eq:boh} 
\begin{equation}\label{eq:before_limit_delta}
	[Eu]_p(A_{-\delta})^p \le (1+\zeta) \liminf_{\eps \to 0^+} \mathscr F_{\e, p}(u,A)
\end{equation} 
and, since $\zeta>0$ is arbitrary, 
\begin{equation*}
	[Eu]_p(A_{-\delta})^p \le \liminf_{\eps \to 0^+} \mathscr F_{\e, p}(u,A). 
\end{equation*} 
We now observe that, since $A$ is an open set, $A_{-\delta} \uparrow A$ as $\delta \to 0^+$. Since positive measures are continuous along monotone sequences \cite[Remark 1.3]{AFP}, we can pass to the limit in \eqref{eq:before_limit_delta}, obtaining 
\begin{equation*}
	[Eu]_p(A)^p \le \liminf_{\eps \to 0^+} \mathscr F_{\e, p}(u,A),
\end{equation*} 
which is the sought estimate. \end{proof}

\subsubsection{The general case} 
We are now ready to discuss the lower bound in the general case of a function $u \in L^p(\Omega;\R^d)$. We will make use of the following observation (which seems to be originally due to E. Stein as mentioned in~\cite{brezis2,ponce}). We will denote by $(\psi_{\eta})_{\eta} \subset C^{\infty}_c(\R^d)$  a family of non-negative smoothing kernels, with $\supp\psi_{\eta} \subset B_{\eta}(0)$ and $\int_{\R^d} \psi_{\eta} = 1$ for every $\eta >0$. For every $u \in L^1(\Omega;\R^d)$ we define $u_{\eta}(x):= (u\ast \psi_{\eta})(x)$, which is well-defined and smooth in the set $\Omega_\eta:= \{x \in \Omega: d(x,\partial \Omega) > \eta\}$. 

\begin{lemma}\label{lemma:convolutions}
	Let $\Omega \subset \R^d$ be an open set and $u \in L^p(\Omega;\R^d)$. Let $A$ be an open set with $A \Subset \Omega$. 
	Then for every $p \in [1,\infty)$, for every $\e>0$ and for every $0 < \eta < \frac{1}{2} \dist(A,\partial\Omega)$ it holds 
	\begin{equation*}
		\mathscr F_{p,\varepsilon} (u_\eta,A) \le \mathscr F_{p,\varepsilon} (u,A_{\eta}), 
	\end{equation*}
	where $A_\eta := A + B_\eta(0)$ is the open neighborhood of $A$ of radius $\eta$.
\end{lemma}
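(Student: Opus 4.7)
The plan is to reduce the inequality to Jensen's inequality applied to a probability average, after recognizing that the integrand of $\mathscr F_{p,\varepsilon}$ is translation invariant in a very strong sense: replacing $(x,y)$ by $(x-z,y-z)$ leaves both the denominator $|y-x|^{2p}$ and the weight $\rho_\varepsilon(y-x)$ unchanged, since $y-x=(y-z)-(x-z)$.

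First I would write out the mollification explicitly: for $x,y\in A$ and $z\in B_\eta(0)$ (where $\eta<\tfrac12\dist(A,\partial\Omega)$ guarantees $u(y-z)$ and $u(x-z)$ are defined and $x-z,y-z\in A_\eta\Subset\Omega$), one has
\begin{equation*}
u_\eta(y)-u_\eta(x) \;=\; \int_{\R^d} \psi_\eta(z)\bigl(u(y-z)-u(x-z)\bigr)\, dz.
\end{equation*}
Taking the inner product with $y-x$ and using the crucial identity $y-x=(y-z)-(x-z)$, the integrand for $u_\eta$ at $(x,y)$ becomes
\begin{equation*}
\frac{\langle u_\eta(y)-u_\eta(x),y-x\rangle}{|y-x|^2} \;=\; \int_{\R^d} \psi_\eta(z)\, \frac{\langle u(y-z)-u(x-z),(y-z)-(x-z)\rangle}{|(y-z)-(x-z)|^2}\, dz.
\end{equation*}

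Next I would apply Jensen's inequality with respect to the probability measure $\psi_\eta(z)\,dz$ and the convex function $t\mapsto |t|^p$, obtaining the pointwise bound
\begin{equation*}
\frac{|\langle u_\eta(y)-u_\eta(x),y-x\rangle|^p}{|y-x|^{2p}} \;\le\; \int_{\R^d} \psi_\eta(z)\,\frac{|\langle u(y-z)-u(x-z),(y-z)-(x-z)\rangle|^p}{|(y-z)-(x-z)|^{2p}}\, dz.
\end{equation*}
Multiplying both sides by $\rho_\varepsilon(y-x)=\rho_\varepsilon((y-z)-(x-z))$ and integrating over $(x,y)\in A\times A$ thus gives
\begin{equation*}
\mathscr F_{p,\varepsilon}(u_\eta,A) \;\le\; \int_{\R^d}\psi_\eta(z)\Bigl(\int_{A\times A}\frac{|\langle u(y-z)-u(x-z),(y-z)-(x-z)\rangle|^p}{|(y-z)-(x-z)|^{2p}}\,\rho_\varepsilon((y-z)-(x-z))\,dy\,dx\Bigr) dz,
\end{equation*}
where Tonelli's theorem is justified by the non-negativity of the integrand.

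The final step is the change of variables $x'=x-z$, $y'=y-z$ in the inner double integral, which is a pure translation of Jacobian one and maps $A\times A$ into $(A-z)\times (A-z)\subset A_\eta\times A_\eta$, since $|z|<\eta$ and $B_\eta$ is symmetric. Dropping to the larger domain and recognizing the result as $\mathscr F_{p,\varepsilon}(u,A_\eta)$ yields
\begin{equation*}
\mathscr F_{p,\varepsilon}(u_\eta,A) \;\le\; \int_{\R^d} \psi_\eta(z)\, \mathscr F_{p,\varepsilon}(u,A_\eta)\, dz \;=\; \mathscr F_{p,\varepsilon}(u,A_\eta),
\end{equation*}
using that $\psi_\eta$ has unit mass. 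There is no real obstacle here; the only subtle point is aligning translations so that the $y-x$ in the denominator and mollifier is preserved exactly, which is what makes $\mathscr F_{p,\varepsilon}$ commute with convolution in this monotone way.
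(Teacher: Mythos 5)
Your proof is correct and follows essentially the same route as the paper: express $u_\eta$ as the convolution $u*\psi_\eta$, pull the inner product inside the $z$-integral, apply Jensen's inequality with respect to the probability measure $\psi_\eta\,dz$, and then use Tonelli plus a measure-preserving translation $x\mapsto x-z$, $y\mapsto y-z$ (which fixes $y-x$ and hence both the denominator and the mollifier $\rho_\varepsilon$) to land inside $\mathscr F_{p,\varepsilon}(u,A_\eta)$.
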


\begin{proof}
	The claim is a rather easy consequence of the convexity of $\mathscr F_{p,\varepsilon}$ which in turn follows by Jensen's inequality. Indeed, we have 
	\begin{equation*}
		\begin{split}
			& \iint_{A \times A}   \vert \langle u_{\eta}(x) - u_{\eta}(y),x-y \rangle |^p \frac{\rho_\varepsilon(x -y)}{{|x - y|^{2p}} } \, dxdy \\
			= & \iint_{A\times A}  \left \vert \left \langle \int_{\R^d}[ u(x-z) - u(y-z) ]\psi_{\eta}(z)\, dz, x-y \right \rangle \right\vert^p  \frac{\rho_\varepsilon(x -y)}{|x - y|^{2p}} \, dxdy \\
			= &\iint_{A\times A} \left \vert  \int_{\R^d} \left \langle  u(x-z) - u(y-z), x-y \right  \rangle  \psi_{\eta}(z)\, dz \right\vert^p \frac{\rho_\varepsilon(x -y)}{|x - y|^{2p}} \, dxdy \\
			\le  &\iint_{A \times A}  \int_{\R^d}\left \vert  \left \langle  u(x-z) - u(y-z), x-y \right \rangle \right\vert^p \psi_{\eta}(z)\, dz \,  \frac{\rho_\varepsilon(x -y)}{|x - y|^{2p}} \, dxdy \\
		\end{split}
	\end{equation*}
	where the last inequality indeed follows by Jensen's inequality with respect to the probability measure $\psi_{\eta}\L^d$. An application of Tonelli's theorem and a change of variables yield 
	\begin{equation*}
		\begin{split}
			\int_{\R^d}\ &\iint_{A \times A} \left \vert  \left \langle  u(x-z) - u(y-z), x-y \right \rangle \right\vert^p   \frac{\rho_\varepsilon(x -y)}{|x - y|^{2p}} \, dxdy \, \psi_{\eta}(z)\, dz \\
			\le  & \int_{\R^d} \iint_{A_{\eta} \times A_{\eta}} \left \vert  \left \langle  u(w) - u(s), w-s \right \rangle \right\vert^p   \frac{\rho_\varepsilon(w -s)}{|w -s|^{2p}} \, dwds \, \psi_{\eta}(z)\, dz  \\
		\end{split}
	\end{equation*}
	and, recalling that $\Vert \psi_\eta\Vert_{L^1} =1$, this concludes the proof. 
\end{proof}

We are now ready to present the proof of the lower bound. 
\begin{proof}[Proof of the lower bound] Let $u \in L^p(\Omega;\R^d)$. It is not restrictive to assume that 
	\begin{equation*}
		L\coloneqq\liminf_{\eps \to 0^+} \mathscr F_{p,\varepsilon} (u, \Omega)
	\end{equation*}
	is finite (otherwise there is nothing to prove). Let $A \Subset \Omega$ be an open set whose closure is compact and contained in $\Omega$. 
	Consider convolution kernels $(\psi_\eta)_\eta$ as above. For sufficiently small $\eta>0$, the associated smooth approximations $u_\eta$ are well defined and smooth in $A \subset \Omega_\eta \subset \Omega$. We can therefore use Lemma \ref{lemma:smooth_lower_bound} and obtain
	\begin{equation}\label{eq:loc_u_eta} 
		[Eu_\eta]_p(A)^{p} \le\liminf_{\eps \to 0^+} \mathscr F_{p,\varepsilon} (u_\eta,A). 
	\end{equation}
	In turn, by Lemma \ref{lemma:convolutions}, we get 
	\begin{equation}\label{eq:convex_convolution0}
		\mathscr F_{p,\varepsilon} (u_\eta, A) \le \mathscr F_{p,\varepsilon} (u, A_\eta) \le  \mathscr F_{p,\varepsilon} (u, \Omega ),
	\end{equation}
	where we have used the fact that $A_\eta = A+B_\eta \subset \Omega$ for sufficiently small $\eta>0$. Passing to the limit in $\e \to 0^+$ in \eqref{eq:convex_convolution0} and combining it with \eqref{eq:loc_u_eta}, we obtain 
	\begin{equation*}
		C_{d,p} |Eu_\eta|(A)^p \le [Eu_\eta]_p(A)^{p} \le L. 
	\end{equation*}
	In particular, the family $\{u_\eta\}_\eta$ has $L^p$-bounded symmetric gradients on $A$ and hence by the lower semicontinuity of the classical total variation we conclude that $Eu \in \Mcal_b(A;M^{d \times d}_\sym)$. Now, we may use the lower semicontinuity of the $\Qbf_p$-variation (cf. the argument given in p.~10) to deduce that 
	\begin{equation}\label{eq:est1}
		\begin{split}
			[Eu]_p(A)^{p} & \le L \; \text{if $p > 1$},\\
			[Eu](A) & \le L \; \text{if $p = 1$}.
		\end{split}
	\end{equation}
	Since $\Omega$ connected and open, it can be written as the monotone limit $A_j \toup \Omega$ of connected open sets, whose closure is compact and contained in $\Omega$. Exploiting again the continuity of positive measures along monotone sequences \cite[Remark 1.3]{AFP}, we may then pass to the limit as $A_j \uparrow \Omega$ in \eqref{eq:est1} and obtain the desired lower bound 		
	\begin{equation}\label{eq:final_lower_bound}
				\begin{split}
					[Eu]_p(\Omega)^{p} & \le L \; \text{if $p > 1$},\\
					[Eu](\Omega) & \le L \; \text{if $p = 1$}.
				\end{split}
			\end{equation}
	This implies that $u \in BD(\Omega)$ if $p = 1$ or $u \in LD^p(\Omega)$ if $p > 1$ and concludes the proof. 
\end{proof}

	\subsection{Proof of Theorems~\ref{thm:m} and~\ref{thm:1}} Let $p \in [1,\infty)$. The existence and characterization of the extended limits
\[
\lim_{\eps \to 0^+} \mathscr F_{p,\eps}(u,\Omega) \in [0,\infty],
\]
for arbitrary functions $u \in L^p(\Omega;\R^d)$,
follows directly from the lower and upper bounds.

\subsection{Proof of Corollaries~\ref{cor:p} and~\ref{cor:BD}} The proof of these two convergence results follows directly from Corollary~\ref{cor:conv_measures} and Theorems~\ref{thm:m},~\ref{thm:1}. Indeed, if $\mu = g\Qbf_p(Eu)$ is the measure from Corollary~\ref{cor:conv_measures}, then the characterizations imply that (in both cases) $g \equiv 1$. Since Corollary~\ref{cor:conv_measures} is valid for arbitrary subsequences of $(\mu_{p,\eps})$, this shows that
\[
\mu_{p,\eps} \toweak \Qbf_p(Eu) \quad \text{in $L^p(\Omega)$ for $p \in (1,\infty)$,}
\]
and 
\[
\mu_{1,\eps}\,\mathscr L^d  \toweakstar [Eu] \quad \text{in $\Mcal(\Omega)$.}
\]
Moreover, from Theorems~\ref{thm:m} and~\ref{thm:1} it follows that
\[
\|\mu_{p,\eps}\|_{L^p(\Omega)} \to \|\Qbf_p\|_{L^p(\Omega)} \quad \text{for $p > 1$},
\]
and
\[
|\mu_{1,\eps} \mathscr L^d|(\Omega) \to [Eu](\Omega),
\]
for the extended values of the norm. Hence, the equivalences in Corollary~\ref{cor:p} follow directly from the convergence of the $L^p$-norms, while  Corollary~\ref{cor:BD} follows verbatim from the aforementioned convergences when $p = 1$.


\section{A functional for the singular part}
We now show that subtracting the first-order term of the approximate Taylor polynomial of $u$ to our symmetric difference quotient leads to a limiting functional representation of the singular part $E^su$. By appealing to similar ideas as the ones introduced in the previous section via the extension operator $T \colon BD(\Omega) \to BD(\R^d)$, it suffices to prove the following proposition:
\begin{proposition}
	Let $u \in BD(\R^d;\R^d)$ and let $\nabla u(x) \in M^{d \times d}$ denote the approximate differential of $u$ at  a point $x$, which exists $\mathscr L^d$-almost everywhere in $\R^d$. Then, the extended limit
	\begin{equation*}
	\lim_{\varepsilon \to 0^+} \iint_{\R^d \times \R^d}  \frac{\vert \langle u(x) - u(y) -\mathcal Eu(x) [y-x],y-x\rangle|}{|x - y|^2} \, \rho_\varepsilon(x -y) \, dx\,dy
	\end{equation*}
exists and equals $[E^s u](\R^d)$.
\end{proposition}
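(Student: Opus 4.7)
The idea is to split the singular-part functional by the exact identity
\[
\iint_{\R^d\times\R^d}\!\frac{|\langle u(y)-u(x)-\mathcal Eu(x)[y-x],y-x\rangle|}{|y-x|^2}\,\rho_\varepsilon(y-x)\,dy\,dx
=\mathscr F_{1,\varepsilon}(u,\R^d)-\mathcal G_\varepsilon(u),
\]
where
\[
\mathcal G_\varepsilon(u):=\iint_{\R^d\times\R^d}\!\bigl(|a(y,x)|-|a(y,x)-c(y,x)|\bigr)\rho_\varepsilon(y-x)\,dy\,dx,
\]
with $a(y,x):=\langle u(y)-u(x),y-x\rangle/|y-x|^2$ and $c(y,x):=\langle \mathcal Eu(x)\omega,\omega\rangle$, $\omega:=(y-x)/|y-x|$. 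Since Theorem~\ref{thm:1} gives $\mathscr F_{1,\varepsilon}(u,\R^d)\to[Eu](\R^d)$ and $[Eu](\R^d)=\int_{\R^d}\Qbf_1(\mathcal Eu)\,dx+[E^su](\R^d)$, the proposition reduces to the claim
\[
\lim_{\varepsilon\to 0^+}\mathcal G_\varepsilon(u)=\int_{\R^d}\Qbf_1(\mathcal Eu(x))\,dx.
\]

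To establish this claim I would argue via pointwise a.e.\ convergence combined with dominated convergence. The elementary scalar inequality $\bigl||b+c|-|b|-|c|\bigr|\le 2|b|$ applied with $b=a-c$, together with the direct radial computation $\int_{\R^d}|c(y,x)|\rho_\varepsilon(y-x)\,dy=\Qbf_1(\mathcal Eu(x))$ (as in the proof of Lemma~\ref{lem:1}), yields the pointwise estimate
\[
\bigl|\mathcal G_\varepsilon(u)(x)-\Qbf_1(\mathcal Eu(x))\bigr|\le 2\nu_\varepsilon(x),\qquad \nu_\varepsilon(x):=\int_{\R^d}|a-c|\,\rho_\varepsilon(y-x)\,dy.
\]
Introducing the Taylor-type remainder $R(y,x):=u(y)-u(x)-\mathrm{ap}\,\nabla u(x)[y-x]$ and using the symmetrization $\langle A(y-x),y-x\rangle=\langle\tfrac{A+A^T}{2}(y-x),y-x\rangle$, one checks that $a-c=\langle R,\omega\rangle/|y-x|$, so $|a-c|\le|R|/|y-x|$. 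At $\mathscr L^d$-a.e.\ (approximately differentiable) $x$, approximate differentiability furnishes $G(r):=\int_0^r\!\int_{\Sbb^{d-1}}|R(x+s\omega,x)|\,dS(\omega)\,s^{d-1}\,ds=o(r^{d+1})$; after passing to polar coordinates, integration by parts (with boundary term at the origin controlled by $G(r)/r=o(r^d)\to 0$) transfers this into $\int_{\R^d}|R|/|y-x|\,\rho_\varepsilon(y-x)\,dy\to 0$, whence $\nu_\varepsilon(x)\to 0$ pointwise a.e.

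To pass this pointwise convergence into the $x$-integral, observe that the reverse triangle inequality $\bigl||a|-|a-c|\bigr|\le|c|$ provides the $\varepsilon$-uniform pointwise majorant $|\mathcal G_\varepsilon(u)(x)|\le \Qbf_1(\mathcal Eu(x))$, and $\Qbf_1(\mathcal Eu)\in L^1(\R^d)$ since $\mathcal Eu\in L^1(\R^d;M^{d\times d}_\sym)$. Dominated convergence then yields $\mathcal G_\varepsilon(u)\to\int\Qbf_1(\mathcal Eu)\,dx$, completing the proof.

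The main technical obstacle is the pointwise convergence $\nu_\varepsilon(x)\to 0$ for general radial mollifiers $\rho_\varepsilon$ satisfying~\eqref{eq:eps1}--\eqref{eq:eps} that need not be compactly supported or bounded. For compactly supported bounded $\rho$, the bound $\hat\rho_\varepsilon\le C\varepsilon^{-d}\mathbbm 1_{[0,\varepsilon]}$ combined with $\int_0^\varepsilon r^{d-2}g(r)\,dr=o(\varepsilon^d)$ (obtained from $G(r)=o(r^{d+1})$ by integration by parts) is straightforward. The general case is recovered by splitting $\rho_\varepsilon$ at a small radius $\delta>0$: the tail is controlled via $\|\rho_\varepsilon\|_{L^1(\R^d\setminus B_\delta)}\to 0$, and the near-origin part is handled as in the compactly supported case, uniformly in $\delta>0$.
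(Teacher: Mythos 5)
Your reduction is a genuinely different and attractive route from the paper's. You split the singular functional exactly as $\mathscr F_{1,\varepsilon}(u,\R^d)-\mathcal G_\varepsilon(u)$, invoke Theorem~\ref{thm:1} for $\mathscr F_{1,\varepsilon}\to[Eu](\R^d)$, and aim to show $\mathcal G_\varepsilon(u)\to\int\Qbf_1(\mathcal Eu)\,dx$; the majorant $|\mathcal G_\varepsilon(u)(x)|\le\Qbf_1(\mathcal Eu(x))$ and the identity $a-c=\langle R,\omega\rangle/|y-x|$ (via the symmetrization trick) are both correct, and the algebra $[Eu]=\int\Qbf_1(\mathcal Eu)\,dx+[E^su]$ follows from the mutual singularity of $\mathscr L^d$ and $E^su$. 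By contrast, the paper follows Ponce--Spector: a lower bound by mollifying $u$ and applying Reshetnyak's continuity to $(E^su)_\eta\toweakstar E^su$, and an upper bound obtained by testing against bounded continuous $\varphi$ and estimating via the fundamental theorem of calculus.

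However, the central claim $\nu_\varepsilon(x)\to 0$ for $\mathscr L^d$-a.e.\ $x$ is not established by the argument you give, and I do not believe it holds for the full class of mollifiers satisfying only \eqref{eq:eps1}--\eqref{eq:eps}. Approximate differentiability gives $G(r)\coloneqq\int_{B_r(x)}|R(y,x)|\,dy=o(r^{d+1})$, hence (after an integration by parts in $r$) $\int_0^\varepsilon g(s)/s\,ds=o(\varepsilon^d)$, where $g(s)$ is the angular integral of $|R|$ times $s^{d-1}$. This is an \emph{averaged} smallness over $[0,\varepsilon]$; to transfer it to $\int_0^\infty (g(s)/s)\,\hat\rho_\varepsilon(s)\,ds\to 0$ you need some quantitative control on the profile $\hat\rho_\varepsilon$, which the hypotheses do not provide: \eqref{eq:eps1}--\eqref{eq:eps} say nothing about $\|\hat\rho_\varepsilon\|_{L^\infty}$, monotonicity, or bounded variation. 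A mollifier whose mass concentrates on a thin annulus $\{|h|\in[a_\varepsilon-b_\varepsilon,a_\varepsilon]\}$ with $b_\varepsilon\ll a_\varepsilon$ can weight a shell on which $g(s)/s$ is atypically large (such shells are compatible with $G(r)=o(r^{d+1})$), making $\nu_\varepsilon(x)$ fail to vanish; the proposed integration by parts against $\hat\rho_\varepsilon/s$ is also not justified since $\hat\rho_\varepsilon$ is merely integrable. Your fallback ``split at radius $\delta$, handle the near-origin piece as in the compactly supported case, uniformly in $\delta$'' does not close the gap: the near-origin piece $\int_0^\delta (g(s)/s)\,\hat\rho_\varepsilon(s)\,ds$ still has an unconstrained $\hat\rho_\varepsilon$, and the uniform-in-$\delta$ claim is precisely the assertion that needs proof. (Your argument does work for the subclass $\hat\rho_\varepsilon\le C\varepsilon^{-d}\mathbbm 1_{[0,\varepsilon]}$, and more generally for radially nonincreasing profiles, but that is a strictly smaller class.) Since the pointwise convergence fails, the dominated-convergence step cannot be invoked; what one truly needs is convergence of the $x$-integral of $\mathcal G_\varepsilon$, and for that the paper's route of passing to convolutions $u_\eta$, using $(E^su)_\eta\toweakstar E^su$, Reshetnyak continuity and a weak-$*$ test-function argument is essential.
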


\begin{proof} We follow closely the ideas contained in~\cite[Sect.~2]{ponce}. From the Radon--Nikod\'ym--Lebesgue decomposition of $Eu$, we find that (with the same notation of the previous section)
\begin{equation}\label{eq:deceiving}
	E(u_\eta )= \mathcal (\mathcal Eu)_\eta + (E^s u)_\eta.
\end{equation}
Define the energies 
\begin{equation*}
	\mathcal R^{\varepsilon}u(x) :=  \int_{\R^d} \frac{\vert \langle u(y) - u(x) - \mathcal Eu(x) [y-x],y-x\rangle|}{|x - y|^2} \, \rho_\varepsilon(y-x) \, dy.
\end{equation*}
Appealing to similar Jensen inequalities as the ones in the previous section, we  find    
\[
\int_{\R^d} \frac{\vert \langle u_\eta(x+h) - u_\eta(x) - (\mathcal Eu)_\eta(x) [h],h\rangle|}{|h|^2} \, \rho_\varepsilon(h) \, d{{h}}  \le (\mathcal{R}^\eps u)_\eta(x)
\]
and hence Young's inequality gives
\begin{equation}\label{eq:R1}
\iint_{\R^d \times \R^d}  \frac{\vert \langle u_\eta(y) - u_\eta(x) - (\mathcal Eu)_\eta(x) [y-x],y-x\rangle|}{|x - y|^2} \, \rho_\varepsilon(y-x) \, dy \le \|\mathcal R^\eps u\|_{L^1}.
\end{equation}
In light of~\eqref{eq:deceiving}, the triangle inequality and the identity
\begin{align*}
	\Qbf_1(A) & = \int_0^\infty \fint_{\partial B_1} {|\dpr{A\omega,\omega}|}\,dS(\omega) \, \hat\rho_\eps(r) |\partial B_1| r^{d-1} \, dr \\
	& = \int_{\R^d}  \frac{|\dpr{Ah,h}|}{|h|^2} \, \rho_\eps(h) \, dh, \qquad \eps > 0,
\end{align*}
we deduce that $\Rcal^\eps u_\eta(x)$ is a bound for the energy $(\Jrm)_\eps(x)$, defined by
\[
\left|\int_{\R^d}  \frac{\vert \langle u_\eta(x+h) - u_\eta(x) - (\mathcal Eu)_\eta(x) [h],h\rangle|}{|h|^2} \, \rho_\eps(h) \, dh - \Qbf_1((E^su)_\eta(x))\right|.
\]
Applying once more the triangle inequality and integrating over $x$, we deduce from the previous bound,~\eqref{eq:R1} and Young's convolution inequality that
\begin{align*}
	[(E^s u)_\eta](\R^d) & = \int_{\R^d} \Qbf_1((E^s u)_\eta(x)) \, dx\\
	 & \le \int_{\R^d}(\Jrm)_\eps(x) \, dx + \int_{\R^d} (\Rcal^\eps u)_\eta(x) \, dx \\
	& \le \|\Rcal^\eps u_\eta\|_{L^1} + \|\Rcal^\eps u\|_{L^1}.
\end{align*}
Now, since $u_\eta \in (C^\infty \cap W^{1,1})(\Omega)$, then the first term on the right-hand side vanishes as $\eps \to 0^+$. On the other hand, by Reshetnyak's continuity theorem and the convergence $(E^s u)_\eta \toweakstar E^s u$, we find that
\[
	[E^s u](\R^d) \le \liminf_{\eps \to 0^+} \|\Rcal^\eps u\|_{L^1}.
\]
This proves the lower bound.

For the upper bound, we continue following \cite{ponce} and we claim that for every non-negative bounded continuous function $\varphi \colon \R^d \to \R$ it holds
\begin{equation}\label{eq:final}
	\int_{\R^d} \mathcal R^{\varepsilon}u(x) \varphi(x) \, dx \le \int_{\R^d} \varphi d[E^s u] + {\rm (II)}_{\e} + {\rm(III)}_{\e}  
\end{equation}
where 
\begin{equation*}
	{\rm(II)}_{\e} := \int_0^1 \int_{\R^d} \left( \int_{\R^d} |\varphi(x+th) -\varphi(x)| \, \rho_{\e}(h) \, dh \right) \, d|E^s u|(x) \,dt
\end{equation*}
and 
\begin{equation*}
	{\rm(III)}_{\e} := \Vert \varphi \Vert_{\infty} \int_0^1 \int_{\R^d} \left( \int_{\R^d} |\mathcal E u(x+th) -\mathcal E u(x)| \, dx \right) \rho_{\e}(h) \, dh \,dt.
\end{equation*}
The conclusion will then follow observing that ${\rm(II)}_\e \to 0^+$ (because $\varphi$ is bounded and continuous) and ${\rm(III)}_{\e} \to 0$ as $\e \to 0$ (because $\mathcal E u$ is an $L^1$-function). To establish \eqref{eq:final}, we rely once again on an approximation argument: by the Fundamental Theorem of Calculus and \eqref{eq:deceiving} we infer 
\begin{equation}\label{eq:fundamental_thm}
	\begin{split}
	| \langle u_\eta(x+h) -u_\eta(x) -  &(\mathcal E u)_\eta(x)\cdot h, h \rangle  | \\
	 & \quad \le  \int_0^1 |\langle (E^s u)_\eta(x+th) \cdot h,h \rangle | \, dt \\ 
	& \qquad +   \int_0^1 | (\mathcal E u)_\eta(x+th) - (\mathcal E u)_\eta(x) | |h|^2 \, dt. 
\end{split}
\end{equation}
Now, exactly as in \cite{ponce}, we observe that, adding and subtracting the term $\varphi(x+th)$ and changing variables $z:=x+th$, one has 
\begin{equation*}
	\begin{split}
		 \int_{\R^d}  & | \langle (E^s u)_\eta(x+th)\cdot h, h \rangle | \varphi(x) \,dx \\  
		& \le \int_{\R^d}   | \langle (E^s u)_\eta(z)h,h \rangle | \varphi(z) \,dz \\
		& \quad + |h|^2 \int_{\R^d} | (E^s u)_\eta(z)| |\varphi(z) - \varphi(z-th)|\,dz. 
	\end{split}
\end{equation*}
In conclusion, by Fubini Theorem 
\begin{equation}\label{eq:almost} 
	\begin{split}
		 \int_{\R^d} \int_{\R^d} & \int_0^1 \frac{ \vert \langle (E^s u)_\eta(x+th) \cdot h, h \rangle \vert}{|h|^2} dt \, \rho_\varepsilon(h) \, dh\, \varphi(x)dx \\ 
		& =  \int_{\R^d} \int_{\R^d} \frac{| \langle (E^s u)_\eta(z)h,h \rangle |}{|h|^2} \rho_\varepsilon(h) \, dh \, \varphi(z) \,dz  + {\rm (II)}_{\e,\eta} \\  
		& =  \int_{\R^d} \varphi(z) \Qbf_1((E^su)_\eta(z)) \,dz  + {\rm (II)}_{\e,\eta} 
	\end{split}
\end{equation}
where we have denoted by 
\begin{equation*}
	{\rm (II)}_{\e,\eta} := \int_0^1 \int_{\R^d} \int_{\R^d} | \varphi(z) - \varphi(z-th)| \, | (E^s u)_\eta(z)  |\,dz\,\rho_\varepsilon(h) \, dh\,  dt .
\end{equation*}
Observe that due to Jensen's inequality we have the point-wise inequalities $|(E^s u)_\eta| \le |E^s u| \ast \psi_\eta$ and $[(E^s u)_\eta] \le [E^s u] \ast \psi_\eta$. In particular, ${\rm (II)}_{\e,\eta} \le {\rm (II)}_{\e}$ which, combined with \eqref{eq:almost} and \eqref{eq:fundamental_thm}, yields \eqref{eq:final} and the proof is complete. 
\end{proof}

\section{Acknowledgements} 
	This project has received funding from the European Research Council (ERC) under the European Union’s Horizon 2020 research and innovation programme, grant agreement No. 757254 (SINGULARITY).

\end{document}